\patchcmd{\thmhead}{(#3)}{#3}{}{}
\DeclareMathOperator{\ini}{in} 
\DeclareMathOperator{\reg}{reg} 
\DeclareMathOperator{\N}{N} 
\DeclareMathOperator{\Span}{Span}
\DeclareMathOperator{\den}{den} 
\DeclareMathOperator{\lcm}{lcm} 
\DeclareMathOperator{\FB}{FB}
\newcommand{\F}{{\mathbb{F}}}
\newcommand{\fq}{\mathbb{F}_q}
\newcommand{\PP}{{\mathbb{P}}}
\newcommand{\NN}{{\mathbb{N}}}
\newcommand{\Z}{{\mathbb{Z}}}
\newcommand{\A}{{\mathbb{A}}}
\newcommand{\calI}{{\mathcal{I}}}
\newcommand{\set}[1]{\left\{#1\right\}}
\newcommand{\size}[1]{\left|#1\right|}
\newcommand{\gen}[1]{\left\langle #1 \right\rangle}
\newcommand{\Floorfrac}[2]{\left\lfloor \frac{#1}{#2} \right\rfloor}
\newcommand{\bfa}{\mathbf{a}}
\renewcommand{\bar}{\overline}
\newcommand{\Mon}{\bar{\mathbb{M}}}
\newcommand{\Moni}[2]{\bar{\mathbb{M}}_{#1}^{(#2)}}
\newcommand{\M}{\mathbb{M}}
\newcommand{\FBi}[1]{\FB^{(#1)}}
\DeclarePairedDelimiter\abs{\lvert}{\rvert}%
\DeclarePairedDelimiter\norm{\lVert}{\rVert}%
\newcolumntype{C}[1]{>{\centering\arraybackslash}p{#1}}
\let\oldabs\abs
\def\abs{\@ifstar{\oldabs}{\oldabs*}}
\let\oldnorm\norm
\def\norm{\@ifstar{\oldnorm}{\oldnorm*}}
\newtheorem{thm}{Theorem}[section]
\newtheorem{prop}[thm]{Proposition}
\newtheorem{cor}[thm]{Corollary}
\newtheorem{lem}[thm]{Lemma}
\theoremstyle{definition}
\newtheorem{defn}[thm]{Definition} 
\newtheorem{rem}[thm]{Remark} 
\newtheorem{ex}[thm]{Example}
\newtheorem{conj}[thm]{Conjecture} 
\title[Maximum number of zeroes of polynomials on weighted projective spaces]{Maximum number of zeroes of polynomials on weighted projective spaces over a finite field}
\author[J. Nardi]{Jade Nardi}
\address[Jade Nardi]{Univ Rennes, CNRS, IRMAR - UMR 6625, Rennes Cedex, France.}
\email{jade.nardi@univ-rennes.fr}
\author[R. San-José]{Rodrigo San-José}
\address[Rodrigo San-José]{Department of Mathematics\\ Virginia Tech\\ Blacksburg, VA USA. \newline \textit{Previous address:} IMUVA-Mathematics Research Institute, Universidad de Valladolid, 47011 Valladolid (Spain).}
\email{rsanjose@vt.edu}
\subjclass[2020]{14G05, 14G15, 13P10, 14G50}
\keywords{Weighted projective spaces, rational points, Serre's bound, footprint bound}
\begin{document}

\maketitle

\begin{abstract}
We compute the maximum number of rational points at which a homogeneous polynomial can vanish on a weighted projective space over a finite field, provided that the first weight is equal to one. This solves a conjecture by Aubry, Castryck, Ghorpade, Lachaud, O'Sullivan and Ram, which stated that a Serre-like bound holds with equality for weighted projective spaces when the first weight is one, and when considering polynomials whose degree is divisible by the least common multiple of the weights. We refine this conjecture by lifting the restriction on the degree and we prove it using footprint techniques, Delorme's reduction and Serre's classical bound. 
\end{abstract}

\section{Introduction}

Let $\F$ denote a field and let $\overline{\F}$ be its algebraic closure. Let $w=(w_0,w_1,\dots,w_m) \in \NN_{\geq 1}^{m+1}$. The \emph{weighted projective space} (WPS for short) of weight $w$, denoted by $\PP(w)$, over the field $\F$ is defined as the quotient
\[ \PP(w)=(\A^{m+1}\setminus \{(0,\dots,0)\})/\overline{\F}^*\] under the following action of $\overline{\F}^*$:
$\lambda \cdot (x_0,\dots,x_m)= (\lambda^{w_0} x_0,\dots,\lambda^{w_m}x_m)$ for $\lambda \in \overline{\F}^*$.
This generalizes the notion of projective spaces, which are recovered with $w=(1,\dots,1)$. WPS are instances of projective toric varieties, with finite quotient singularities when $w\neq(1,\dots,1)$. As recalled in \cite{Beltrametti}, WPS arise naturally in diverse contexts in algebraic geometry, such as the study of invariants and moduli spaces, notably of surfaces (see, e.g., \cite{Catanese,Dolgachev_surfaces}). 

In the whole paper, we assume that $\F=\fq$ is the finite field with $q$ elements. The set of $\fq$-rational points of $\PP(w)$, that are the elements of $\PP(w)$ which are invariant under the Frobenius automorphism, is denoted by $\PP(w)(\fq)$. The number of $\fq$-points of $\PP(w)$ does not depend on the vector of weights $w$ and we set 
$$
p_m:=\abs{\PP(w)(\fq)}=\frac{q^{m+1}-1}{q-1}.
$$
The coordinate ring (or Cox ring) of $\PP(w)$ is the polynomial ring $\fq[x_0,\dots,x_m]$ where $\deg(x_i)=w_i$. We denote this ring by $\fq[x_0,\dots,x_m]^w$ to highlight the role of $w$. For any integer $d \geq 0$, we write $\fq[x_0,\dots,x_m]_d^w$ for the degree-$d$ component of $\fq[x_0,\dots,x_m]^w$, the vector space that consists of the polynomials $f \in \fq[x_0,\dots,x_m]^w$ that have degree $d$ and are \emph{homogeneous} with respect to the grading associated to $w$ (or \emph{weighted homogeneous}), i.e.,
\[f(\lambda^{w_0}x_0,\dots,\lambda^{w_m}x_m)=\lambda^{d}f(x_0,\dots,x_m).\]

For any homogeneous polynomial $f \in \fq[x_0,\dots,x_m]^w$, we denote by $V_{\PP(w)}(f)$ the hypersurface defined by $f=0$, by $V_{\PP(w)}(f)(\fq)$ the set of $\fq$-points of $\PP(w)$ at which $f$ vanishes, and by $\abs{V_{\PP(w)}(f)(\fq)}$ its cardinality.

Aubry et al. \cite{ghorpadeWPRM} set
\begin{equation}
    e_q(d;w_0,w_1,\dots,w_m):=\max_{f \in \fq[x_0,\dots,x_m]_d^w} \abs{V_{\PP(w)}(f)(\fq)},
\end{equation}
and stated the following conjecture \cite[Conjecture~2.3]{ghorpadeWPRM}.

\begin{conj}\label{sudhir conjecture}
Let $w'=(w_1,\dots,w_m)$. If $w_0=1$ and $\lcm(w')\mid d$, then 
$$
e_q(d;1,w_1,\dots,w_m)= \min\left \{p_m,\frac{d}{w_1}q^{m-1}+p_{m-2}\right\}.
$$
\end{conj}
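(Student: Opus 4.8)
The plan is to establish the two inequalities $e_q(d;1,w')\le\min\{p_m,\tfrac{d}{w_1}q^{m-1}+p_{m-2}\}$ and $e_q(d;1,w')\ge\min\{p_m,\tfrac{d}{w_1}q^{m-1}+p_{m-2}\}$ separately, assuming throughout that the weights are sorted, $1=w_0\le w_1\le\dots\le w_m$, and recalling $w_1\mid\lcm(w')\mid d$. For the \emph{lower bound}, set $t=d/w_1\in\NN$; the key observation is that $x_0^{w_1}-cx_1$ is weighted homogeneous of degree $w_1$ for every $c\in\fq$. If $t\le q-1$ take $f=\prod_{j=1}^t(x_0^{w_1}-c_jx_1)$ for distinct nonzero $c_1,\dots,c_t$: each $\{x_0^{w_1}=c_jx_1\}$ has $q^{m-1}+p_{m-2}$ rational points (splitting according to $x_0\ne 0$ and $x_0=0$), any two meet exactly along $\{x_0=x_1=0\}\cong\PP(w_2,\dots,w_m)$, so inclusion–exclusion gives $|V(f)(\fq)|=tq^{m-1}+p_{m-2}$. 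For $t=q$ one lets the $c_j$ run over all of $\fq$: then $x_0^{w_1}\mid f$, so $f$ vanishes outside $\{x_0\ne 0,\,x_1=0\}$, giving $p_m-q^{m-1}=tq^{m-1}+p_{m-2}$. For $t\ge q+1$ one takes $f=x_0^{\,d-(q+1)w_1}x_1\prod_{c\in\fq}(x_0^{w_1}-cx_1)$, which has degree $d$ (note $d-(q+1)w_1=w_1(t-q-1)\ge 0$) and vanishes on all of $\PP(1,w')(\fq)$, so $|V(f)(\fq)|=p_m$. In each case this matches $\min\{p_m,\tfrac{d}{w_1}q^{m-1}+p_{m-2}\}$.

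For the \emph{upper bound} I would first apply \emph{Delorme's reduction} to assume $\PP(1,w')$ is well-formed: if $\delta=\gcd(w_1,\dots,w_m)>1$ then $\PP(1,w_1,\dots,w_m)\cong\PP(1,w_1/\delta,\dots,w_m/\delta)$ via an isomorphism defined over $\fq$ that carries degree-$d$ weighted homogeneous polynomials to degree-$(d/\delta)$ ones (here $\delta\mid w_1\mid d$) and leaves both $p_m$ and $\tfrac{d}{w_1}q^{m-1}+p_{m-2}=\tfrac{d/\delta}{w_1/\delta}q^{m-1}+p_{m-2}$, as well as the hypothesis $\lcm(w')\mid d$, unchanged. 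Then I would induct on $m$, the case $m=1$ (where $\PP(1,w_1)\cong\PP^1$ after this reduction) being elementary or a special case of Serre's bound. Fix a monomial order $\prec$ refining the weighted grading. The affine chart $x_0\ne 0$ identifies $\PP(1,w')$ with $\A^m\sqcup\PP(w')$, so for weighted homogeneous $f$ of degree $d$ one gets $N(f):=|V_{\PP(1,w')}(f)(\fq)|=N_{\mathrm{aff}}(f)+N_\infty(f)$ with $N_{\mathrm{aff}}(f)=|V_{\A^m}(f(1,\mathbf x))(\fq)|$ and $N_\infty(f)=|V_{\PP(w')}(f(0,\mathbf x))(\fq)|$. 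The \emph{footprint bound} applied to $\langle f(1,\mathbf x),x_1^q-x_1,\dots,x_m^q-x_m\rangle$ yields $N_{\mathrm{aff}}(f)\le q^m-\prod_i\max(0,q-\alpha_i)\le q^{m-1}\sum_i\alpha_i$, where $x^\alpha=\ini_\prec(f(1,\mathbf x))$; since every monomial of $f(1,\mathbf x)$ has weighted degree at most $d$ and all $w_i\ge w_1$, we get $\sum_i\alpha_i\le(\sum_i\alpha_iw_i)/w_1\le d/w_1$, and since $\sum_i\alpha_i$ and $d/w_1$ are integers this improves to $\sum_i\alpha_i\le d/w_1-1$ as soon as $\deg f<d$. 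Hence, if $x_0\mid f$, write $f=x_0^sg$ with $s\ge1$, $x_0\nmid g$; then $\{x_0=0\}=\PP(w')\subseteq V(f)$ and, using $\deg g=d-s<d$ and $p_{m-1}=q^{m-1}+p_{m-2}$, $N(f)=p_{m-1}+N_{\mathrm{aff}}(g)\le p_{m-1}+(\tfrac{d}{w_1}-1)q^{m-1}=\tfrac{d}{w_1}q^{m-1}+p_{m-2}$ (the case $\sum_i\alpha_i\ge q$, i.e.\ $d/w_1\ge q+1$, is absorbed into the trivial bound $N(f)\le p_m=\min\{\cdot\}$).

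The remaining and hardest case is $x_0\nmid f$, where $\bar f:=f(0,\mathbf x)$ is a nonzero degree-$d$ weighted homogeneous polynomial on $\PP(w')=\PP(w_1,\dots,w_m)$. By the choice of $\prec$ the leading monomial $x^\alpha$ of $f(1,\mathbf x)$ coincides with $\ini_\prec(\bar f)$ (the monomials of $\bar f$ are precisely those of $f(1,\mathbf x)$ of maximal weighted degree $d$), so $\sum_i\alpha_iw_i=d$, and $N(f)\le q^{m-1}\sum_i\alpha_i+N_\infty(f)$. The obstacle is that $\PP(w')$ has no distinguished weight-one coordinate, so the inductive hypothesis does not apply to $N_\infty(f)$ directly, and $N_\infty(f)$ can be almost as large as $p_{m-1}$ precisely when $N_{\mathrm{aff}}(f)$ is correspondingly small — the two estimates must be made to fit together through the common leading monomial $x^\alpha$. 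The plan is: apply Delorme's reduction to $\PP(w')$ (tracking how $d$ changes) to bring it to well-formed shape; bound $N_\infty(f)$ by a footprint/Serre-type estimate on the resulting space — using the stratification of a WPS into quotients of affine spaces together with the footprint bound on each stratum, and invoking Serre's classical bound on ordinary projective space exactly in the subcase where $x^\alpha$ is supported on the lowest-weight variables (so $\sum_i\alpha_i$ is large and the pure footprint estimate is too weak) — obtaining $N_\infty(f)\le p_{m-2}+q^{m-1}\big(\tfrac{d}{w_1}-\sum_i\alpha_i\big)$; adding this to the bound for $N_{\mathrm{aff}}(f)$ then gives $N(f)\le\tfrac{d}{w_1}q^{m-1}+p_{m-2}$, while $N(f)\le p_m$ is automatic since $|\PP(1,w')(\fq)|=p_m$.

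I expect the main obstacle to be exactly this last step: producing an estimate for $N_\infty(f)$ on the weight-$w'$ boundary that is quantitatively compatible with the affine footprint estimate, given that $\PP(w')$ carries no weight-one coordinate. This is where Delorme's reduction (to normalize $\PP(w')$ and, ultimately, to relate the extreme "$x^\alpha$ supported on minimal-weight variables" subcase to ordinary projective space) and Serre's classical bound (to control hypersurfaces there) are essential, and where the bookkeeping — how $d$ transforms under the successive reductions, how standard monomials of the vanishing ideal of a WPS get "used up" by $\bar f$, and reconciling the two leading-term bounds — must be carried out with care. Everything else, namely the lower-bound construction and the affine-chart/footprint steps above, is routine once the footprint bound and the $x_0$-chart decomposition are in place.
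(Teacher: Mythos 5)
Your lower-bound construction is correct and is essentially the paper's (the paper uses $x_0^{a_0^\star}\prod_i(x_1-\alpha_i x_0^{w_1})$, you use $\prod_j(x_0^{w_1}-c_jx_1)$; both are unions of ``hyperplanes'' through $\set{x_0=x_1=0}$ and give $tq^{m-1}+p_{m-2}$ points). The case $x_0\mid f$ of your upper bound also goes through. The genuine gap is the case $x_0\nmid f$, and the specific estimate you propose there is not just unproven but false. Take $w=(1,1,1,2)$, $d=2$, $f=x_1x_2$. Then $\lcm(w')=2\mid d$, $x_0\nmid f$, $\ini_\prec(f(1,\mathbf{x}))=x_1x_2$, so $\sum_i\alpha_i=2=d/w_1$ and your claimed inequality reads $N_\infty(f)\le p_{m-2}+q^{m-1}\bigl(\tfrac{d}{w_1}-\sum_i\alpha_i\bigr)=p_1=q+1$. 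But $V_{\PP(1,1,2)}(x_1x_2)(\fq)$ is the union of two weighted lines meeting in the single point $(0:0:1)$, so $N_\infty(f)=2(q+1)-1=2q+1$. The total count still comes out right ($N_{\mathrm{aff}}(f)=2q^2-q$, summing to $2q^2+q+1$), but only because your affine bound $N_{\mathrm{aff}}(f)\le q^{m-1}\sum_i\alpha_i$ overshoots by exactly the amount $N_\infty$ undershoots; the two estimates cannot be decoupled as you propose. This is not a bookkeeping issue one can fix locally: the boundary $\PP(w')$ has no weight-one coordinate, and Section~\ref{s:w0>1} of the paper shows that hypersurface point counts on such spaces behave erratically (no clean Serre-type formula is known even for $\PP(2,3,5)$), so there is no reason a stratum-by-stratum footprint estimate would yield the precise compensating bound your additive scheme requires.

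The paper avoids the affine/infinity decomposition entirely. It works with the initial monomial of the \emph{homogeneous} $f$ for deglex with $x_0<x_1<\dots<x_m$ and applies the projective footprint bound $\abs{V_{\PP(w)}(f)(\fq)}\le p_m-\FB(\ini(f))$ directly on all of $\PP(w)(\fq)$ (Lemmas~\ref{lem:FB_x0} and~\ref{lem:ws>1} handle $\ini(f)$ involving $x_0$ or a variable of weight $>w_1$). In the one remaining case, $\ini(f)=x_1^{a_1}\cdots x_\ell^{a_\ell}$, the footprint bound genuinely fails (Example~\ref{ex:bad_fb}) --- so no refinement of your stratified footprint plan can succeed there either --- but the choice of order forces $f$ to depend only on $x_0,\dots,x_\ell$, all of weight $1$ or $w_1$; Delorme's reduction (Lemma~\ref{l:delorme}) then maps $f$ to a degree-$d/w_1$ form on the honest $\PP^\ell$, Serre's bound applies, and a fibration count over $x_{\ell+1},\dots,x_m$ gives Lemma~\ref{lem:var_weights=w_1}. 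You would need to replace your $N_\infty$ step with an argument of this kind; as written, the proposal does not close.
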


When $w_0=w_1=\cdots=w_m=1$, this result was already conjectured by Tsfasman at the \textit{Journées Arithmétiques de Luminy} in 1989, and proven in 1991 by Serre \cite{serreZeroesProjectiveSpace} and S{\o}rensen \cite{sorensen}, independently. Conjecture \ref{sudhir conjecture} has been proven for $m\leq 2$ by Aubry et al. \cite[Theorem~2.4]{ghorpadeWPRM}. Aubry and Perret \cite{aubryperretWPRM} provide a proof for Conjecture \ref{sudhir conjecture} in the case where $w_0=w_1=1$, which we pointed out to be valid only if $\gcd(w_i,w_j,q-1)= 1$ for $i \neq j$ \cite{aubryperretWPRM:corrigendum}.

Conjecture~\ref{sudhir conjecture} is strongly connected with algebraic coding theory, as it provides the minimum distance of weighted projective Reed-Muller codes \cite{ghorpadeWPRM} (note that this definition differs from that in \cite{sorensenWRM}), or simply  projective Reed-Muller codes when $w=(1,\dots,1)$ \cite{sorensen}. The minimum distance is the most important basic parameter, since it determines the error-correction capability of the code. It is also a key parameter for many applications of coding theory (e.g., code-based cryptography \cite{McEliece1978}, quantum error-correction \cite{kkks}, private information retrieval \cite{hollanti_pir}, secret sharing \cite{cramer_secret_sharing_and_codes}, multiparty computation \cite{cramer_multiparty_computation}).

\medskip

In what follows we assume that
\[1=w_0 \leq w_1=\dots=w_\ell < w_{\ell+1} \leq \cdots \leq w_m\]
with $1\leq \ell \leq m-1$. As a consequence, we have $w_m\geq 2$. The main goal of this paper is to prove the following theorem.

\begin{thm}\label{t:theorem_conjecture_intro}
Assume that $w_0=1$. Then 
\[
e_q(d;1,w_1,\dots,w_m)= \min\set{p_m,\left(\Floorfrac{d-1}{w_1}+1\right)q^{m-1}+p_{m-2}}.
\]
\end{thm}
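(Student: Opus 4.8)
The plan is to establish the two inequalities separately, using throughout the stratification forced by $w_0=1$:
\[
\PP(w)(\fq)\ =\ \{x_0\neq 0\}\ \sqcup\ \{x_0=0\}\ \cong\ \A^m(\fq)\ \sqcup\ \PP(w_1,\dots,w_m)(\fq),
\]
which splits the $p_m=q^m+p_{m-1}$ rational points into $q^m$ affine points and $p_{m-1}$ points at infinity. Writing $k=\Floorfrac{d-1}{w_1}+1$, this separates the regime $k\leq q$, where $kq^{m-1}+p_{m-2}<p_m$, from the regime $k\geq q+1$, where the right-hand side equals $p_m$.

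For the lower bound I would exhibit an explicit homogeneous polynomial of degree $d$ attaining the value. In the regime $k\geq q+1$ it suffices to produce a nonzero degree-$d$ element of the vanishing ideal $I(\PP(w)(\fq))$ — such an element exists precisely when $k\geq q+1$ — which can be assembled from the binomials $x_i^q-x_ix_0^{w_i(q-1)}$ ($1\leq i\leq m$) together with the defining equations of $\{x_0=0\}$. In the regime $k\leq q$ I would take, according to whether $w_1\mid d$,
\[
f=\prod_{i=1}^{d/w_1}\bigl(x_1-c_ix_0^{w_1}\bigr)\qquad\text{or}\qquad f=x_0^{\,d-w_1\lfloor d/w_1\rfloor}\prod_{i=1}^{\lfloor d/w_1\rfloor}\bigl(x_1-c_ix_0^{w_1}\bigr),
\]
with the $c_i$ pairwise distinct in $\fq$ (possible since there are at most $q$ of them). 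Evaluating along the stratification, $f$ vanishes on $d/w_1$, resp.\ $\lfloor d/w_1\rfloor$, parallel affine hyperplanes, and at infinity on $\{x_0=x_1=0\}\cong\PP(w_2,\dots,w_m)(\fq)$, resp.\ on all of $\{x_0=0\}$; summing the contributions gives $kq^{m-1}+p_{m-2}$ in both cases.

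For the upper bound, let $f$ be homogeneous of degree $d$ with $V_{\PP(w)}(f)(\fq)\neq\PP(w)(\fq)$ (otherwise the trivial bound $p_m$ applies), set $I=I(\PP(w)(\fq))$, fix a monomial order on $\fq[x_0,\dots,x_m]$ refining the weighted grading, and let $x^\alpha=\lt(f)$, which lies in the footprint $\Delta$ of $I$ (the monomials not in the initial ideal $\lt(I)$). The footprint bound yields, for $e$ large,
\[
\abs{V_{\PP(w)}(f)(\fq)}\ \leq\ \#\Delta_e-\#\{x^\beta\in\Delta_e : x^\alpha\mid x^\beta\}\ =\ p_m-\#\{x^\beta\in\Delta_e : x^\alpha\mid x^\beta\},
\]
where $\Delta_e$ denotes the degree-$e$ part of $\Delta$; so it remains to show $\#\{x^\beta\in\Delta_e : x^\alpha\mid x^\beta\}\geq(q+1-k)q^{m-1}$ for every $x^\alpha\in\Delta_d$ when $k\leq q$ (when $k\geq q+1$ there is nothing to prove). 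This splits into (a) a sufficiently precise description of $\Delta$ and (b) a combinatorial minimization of the number of degree-$e$ multiples of $x^\alpha$ over $x^\alpha\in\Delta_d$. For (a), since $w_0=1$ a Gröbner basis of $I$ should be obtainable by adjoining to the binomials $x_i^q-x_ix_0^{w_i(q-1)}$ a Gröbner basis of the vanishing ideal of $\PP(w_1,\dots,w_m)(\fq)$ at infinity — the affine part of $\Delta$ being then exactly the footprint over $\fq^m$ — and Delorme's reduction is used to replace this weighted projective space by a well-formed one (in particular to reduce $\gcd(w_1,\dots,w_m)$ to $1$), with Serre's classical bound settling the case where the reduced space is an ordinary projective space. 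For (b), the extremal leading monomial should be a power of $x_0$ times the largest power of the smallest-weight variable $x_1$ available in degree $d$, namely $x^\alpha$ with $\alpha_1=\lfloor d/w_1\rfloor$, and counting its degree-$e$ multiples in $\Delta$ — those with a positive $x_0$-exponent, constrained by $\alpha_1+\gamma_1\leq q-1$, together with those supported away from $x_0$ — should return exactly $(q+1-k)q^{m-1}$.

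The main obstacle is step (a): the weighted projective space at infinity need not have a weight-one variable, so the convenient binomial shape of its defining equations is lost and the recursion does not reproduce the original situation; it is precisely to control this that Delorme's reduction and, at the bottom of the recursion, Serre's classical bound are invoked. A more delicate but less structural point is the handling of the floor function in step (b) when $w_1\nmid d$: one has to check that the monomials supported away from $x_0$ contribute an extra $q^{m-1}$ relative to the divisible case, which is exactly the source of the refinement $\Floorfrac{d-1}{w_1}+1$ of the quantity $\frac{d}{w_1}$ that removes the hypothesis $\lcm(w_1,\dots,w_m)\mid d$ of Conjecture~\ref{sudhir conjecture}.
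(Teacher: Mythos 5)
Your lower bound is essentially the paper's (the polynomial $x_0^{a_0^\star}\prod_i(x_1-c_ix_0^{w_1})$, and a multiple of $B_{0,1}=x_0\bigl(x_1^q-x_1x_0^{w_1(q-1)}\bigr)$ once $d\geq w_1q+1$), so that part is fine. The genuine gap is in step (b) of your upper bound: it is \emph{false} that every $x^\alpha\in\Delta_d$ has at least $(q+1-k)q^{m-1}$ degree-$e$ multiples in $\Delta_e$, i.e.\ the footprint bound alone does not yield $\abs{V_{\PP(w)}(f)(\fq)}\leq kq^{m-1}+p_{m-2}$ for all $f$. Your candidate minimizer $x_0^{a_0^\star}x_1^{\lfloor d/w_1\rfloor}$ is not extremal as soon as two or more of $x_1,\dots,x_m$ share the minimal weight $w_1$ (the case $\ell\geq 2$): for example, with $w_1=w_2=1$ and $d>1$ one computes
\[
\FB\left(x_2^{d}\right)=q^{m-2}\bigl((q-d)(q+1)+1\bigr)<q^{m-1}(q-d+1)=\FB\left(x_1^{d}\right),
\]
so that $p_m-\FB\left(x_2^{d}\right)=dq^{m-1}+dq^{m-2}+p_{m-3}$ exceeds the target $dq^{m-1}+p_{m-2}$ by $(d-1)q^{m-2}$. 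The same failure occurs for any leading monomial supported on the weight-$w_1$ block $x_1,\dots,x_\ell$ other than a pure power of $x_1$ times $x_0$. Your use of Delorme and Serre is confined to describing $\Delta$ at infinity in step (a); no improved description of $\Delta$ can repair this, because the footprint of these monomials genuinely is too small.

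What is needed is a second, non-footprint argument for exactly these leading monomials. If $\ini(f)=x_1^{a_1}\cdots x_\ell^{a_\ell}$, then (since among monomials of equal weighted degree the order is lex with $x_m>\cdots>x_0$) every monomial of $f$ is supported on $x_0,\dots,x_\ell$, and weighted homogeneity forces all $x_0$-exponents to be multiples of $w_1$; hence $f(x_0,\dots,x_\ell)=\tilde f(x_0^{w_1},x_1,\dots,x_\ell)$ for an ordinary homogeneous $\tilde f$ of degree $d/w_1$ on $\PP^\ell$. Delorme's reduction identifies $\PP(1,w_1,\dots,w_1)(\fq)$ with $\PP^\ell(\fq)$, Serre's bound gives $\abs{V_{\PP^\ell}(\tilde f)(\fq)}\leq \frac{d}{w_1}q^{\ell-1}+p_{\ell-2}$, and counting fibres over the strata $\{(x_0,\dots,x_\ell)\neq 0\}$ and $\{x_0=\cdots=x_\ell=0\}\cong\PP(w_{\ell+1},\dots,w_m)$ yields $\abs{V_{\PP(w)}(f)(\fq)}\leq \frac{d}{w_1}q^{m-1}+p_{m-2}$. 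Two smaller corrections: the binomial $x_i^q-x_ix_0^{w_i(q-1)}$ is \emph{not} in the vanishing ideal (it fails at points at infinity with $x_i\neq 0$); only $x_0$ times it is, and one does not need a full Gr\"obner basis of $\calI(\PP(w)(\fq))$ — only the strata $\Moni{\tilde d}{i}$ for indices $i$ with $w_i=1$, which these binomials already determine.
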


If $w_1$ divides $d$, then 
$
\Floorfrac{d-1}{w_1}=\frac{d}{w_1}-1.
$
Thus, Theorem \ref{t:theorem_conjecture_intro} refines Conjecture \ref{sudhir conjecture} by removing the divisibility conditions on the degree $d$. Note that assuming that $\lcm(w')$ divides $d$ avoids many degenerate cases. One can easily check that if $w_i \nmid d$ for some $0\leq i \leq m$, then no monomial of the form $x_i^\alpha$ can have weighted degree $d$. This implies that the point $(0:\dots:0:1:0:\dots:0)$, with a single 1 in position $i$, is a common zero of all the homogeneous polynomials in $\fq[x_0,\dots,x_m]^w_d$. However, our argument does not require this condition and we are thus able to obtain a more general result.

An important tool for our proof is the projective footprint bound for weighted projective spaces, which is similar to the approach taken by Beelen et al. \cite{projectivefootprint} to prove Conjecture \ref{sudhir conjecture} for the case $w=(1,\dots,1)$ and by \c{C}ak{\i}ro{\u{g}}lu et al., which proved Theorem~\ref{t:theorem_conjecture_intro} for $m=2$ \cite[Theorem~6.1]{caki_sahin_nardi25}. This technique comes from Gr\"obner basis theory and provides an upper bound for the number of zeroes of a polynomial in terms of a combinatorial quantity associated to its initial term, called the \emph{footprint}. For the affine space, this is a well-known approach (see, e.g., \cite[Thm. 6 and Prop. 7, Chapter 5 §3]{cox}) and it has been used in an analogous way to Bezout's theorem \cite{geilbezout}. These ideas have been generalized to the projective space \cite{projectivefootprint,martinez} and more generally to simplicial toric varieties \cite{nardi_projective_toric}. Nevertheless, the footprint bound alone cannot be used to prove the main theorem, as the footprint of some monomials overestimates the number of zeroes compared to the sought bound. Beelen et al. \cite{projectivefootprint} benefited from the $(m+1)$-transitivity of the automorphism group of the projective space $\PP^m$ to overcome this issue. In our case, the footprint technique is fruitful only for polynomials whose initial monomial involves $x_0$ or at least one variable of weight greater than $w_1$ (see Lemmas \ref{lem:FB_x0} and \ref{lem:ws>1}). Otherwise, such a polynomial $f$ only involves the variables $x_0,\dots,x_\ell$, and we use Delorme's reduction (Lemma \ref{l:delorme}) and Serre's bound on $\PP^\ell$ to estimate $\abs{V_{\PP(w)}(f)(\fq)}$ (see Lemma \ref{lem:var_weights=w_1}).

The paper is organized as follows. In Section \ref{s:preliminaries} we provide the necessary preliminaries for our proof, which mainly include facts about the vanishing ideal of $\PP(w)(\fq)$ and the footprint bound. In Section \ref{s:proof} we prove Theorem \ref{t:theorem_conjecture_intro}, and in Section \ref{s:w0>1} we study how $e_q(d;w_0,\dots,w_m)$ behaves when considering $w_0>1$. We compute $e_q(d;w_0,w_1)$ for any weights and we obtain lower and upper bounds for $e_q(d;w_0,\dots,w_m)$. Moreover, we provide many examples and numerical experiments showing how different the behaviour of $e_q(d;w_0,\dots,w_m)$ is when $w_0>1$. Finally, in Appendix \ref{s:appendix} we provide a precise description of two classical isomorphisms of WPS (weight reductions), and how $\fq$-points and polynomials are mapped under these, which we use throughout the paper.

\section{Preliminaries}\label{s:preliminaries}

We denote by $\M$ the set of all the monomials of $\fq[x_0,\dots,x_m]^w$. We also set
\[\M_d:=\set{x_0^{a_0}\cdots x_m^{a_m} \in\M : \deg(x_0^{a_0}\cdots x_m^{a_m})=d}, \]
and we can write $\fq[x_0,\dots,x_m]^w_d=\Span \M_d$.

\begin{defn}
    Let $w=(w_0,w_1,\dots,w_m) \in \NN_{\geq 1}^{m+1}$. We denote by $\gen{w_0,w_1,\dots,w_m}_\NN$ (or $\gen{w}_\NN$ for short) the semigroup of integers that can be written as a linear combination of $w_0,w_1,\dots,w_m$ with nonnegative integer coefficients.  
\end{defn}

From this definition, it is clear that $\M_d \neq \emptyset$ if and only if $d\in\gen{w}_\NN$. The cardinality of $\M_d$ is equal to the denumerant of $d$ with respect to $w$, defined as follows.

\begin{defn}\label{def:denum}
    Let $w=(w_0,w_1,\dots,w_m) \in \NN_{\geq 1}^{m+1}$. For any $d \in \NN$, we define the \emph{denumerant of $d$ with respect to $w$} as
    \[\den(d;w):=\size{\set{(i_0,\dots,i_{m}) \in \NN^{m+1} \text{ such that } w_0 i_0 + \dots+ w_mi_m=d}}.\]
\end{defn}

The \emph{vanishing ideal of $\PP(w)(\fq)$}, denoted by $\calI(\PP(w)(\fq))$, is the ideal generated by homogeneous polynomials vanishing on $\PP(w)(\fq)$.

\begin{thm}[{\cite[Prop.~5.6]{mesutComputingVanishing}}]\label{thm:binomial}
    $\calI(\mathbb{P}(w)(\fq))$ is binomial.
\end{thm}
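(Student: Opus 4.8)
The plan is to realize $\calI(\PP(w)(\fq))$ as the weighted-homogeneous part of the vanishing ideal of an explicit finite subset of the affine cone, and then to exploit a torus symmetry to force binomiality. First I would pass to the affine picture. Let $\hat V \subseteq \fq^{m+1}\setminus\{0\}$ be the set of all representatives with coordinates in $\fq$ of points of $\PP(w)(\fq)$. A preliminary point to settle is that every $\fq$-rational point admits such a representative: if $a\in\bar\fq^{m+1}$ represents a Frobenius-stable orbit, then $a_i^q=\lambda^{w_i}a_i$ for a common $\lambda\in\bar\fq^*$, and choosing $\mu\in\bar\fq^*$ with $\mu^{q-1}=\lambda^{-1}$ (possible since every element of $\bar\fq^*$ is a $(q-1)$-th power) makes $(\mu^{w_i}a_i)_i=\mu\cdot a\in\fq^{m+1}$. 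With this, a weighted-homogeneous $f\in\fq[x_0,\dots,x_m]^w$ vanishes on $\PP(w)(\fq)$ if and only if it vanishes on all of $\hat V$, so $\calI(\PP(w)(\fq))$ is precisely the weighted-homogeneous part of the affine vanishing ideal $I(\hat V)$.

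The key structural observation is that $\hat V$ is invariant under the coordinatewise action of the finite torus $(\fq^*)^{m+1}$: scaling a representative of an $\fq$-point by $t\in(\fq^*)^{m+1}$ again yields an $\fq$-rational representative of an $\fq$-point. Being a finite $(\fq^*)^{m+1}$-invariant subset of $\fq^{m+1}$, $\hat V$ is a disjoint union of full coordinate orbits $O_\sigma=\prod_{i\in\sigma}\fq^*\times\prod_{i\notin\sigma}\{0\}$, where $\sigma$ ranges over the set $\Sigma$ of support patterns occurring among $\fq$-points; one checks that $\Sigma$ is the set of all nonempty subsets of $\{0,\dots,m\}$. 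Consequently $I(\hat V)$ is homogeneous for the $(\Z/(q-1))^{m+1}$-grading in which $x^\alpha$ has degree $\bar\alpha$, and it suffices to prove that each graded piece is spanned by binomials.

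Fix a weighted degree $d$ and a character class $c$, and let $M_{c,d}$ be the monomials $x^\alpha$ with $\deg_w(x^\alpha)=d$ and $\bar\alpha=c$. Restricting to a single orbit $O_\sigma$, a monomial $x^\alpha$ vanishes identically when $\supp(\alpha)\not\subseteq\sigma$ and otherwise restricts to a character of the finite abelian group $(\fq^*)^\sigma$; crucially, because $\bar\alpha=c$ is fixed, all nonvanishing monomials of $M_{c,d}$ restrict to the same nonzero character on $O_\sigma$. Hence $f=\sum_\alpha c_\alpha x^\alpha$ vanishes on $O_\sigma$ exactly when $\sum_{\supp(\alpha)\subseteq\sigma}c_\alpha=0$, and $f\in I(\hat V)$ precisely when these linear conditions hold for every $\sigma\in\Sigma$. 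I would then package these conditions as orthogonality of the coefficient vector $(c_\alpha)$ to the vectors $u_\sigma=(\mathbf{1}[\supp(\alpha)\subseteq\sigma])_\alpha$; since each $u_\sigma$ depends only on $\supp(\alpha)$, the span of the $u_\sigma$ consists of vectors constant on each support-block, and the inclusion relations among supports let Möbius inversion over the Boolean lattice show that $\{u_\sigma\}$ spans exactly the full block-constant space. Its orthogonal complement is then the span of $\{e_\alpha-e_\beta:\supp(\alpha)=\supp(\beta)\}$, so the graded piece of $I(\hat V)$ is spanned by binomials $x^\alpha-x^\beta$ with equal support. Summing over $c$ and $d$ yields that $\calI(\PP(w)(\fq))$ is generated by binomials.

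The step I expect to be most delicate is the last one: ensuring that imposing the vanishing conditions simultaneously over all orbits $\sigma\in\Sigma$ does not create linear relations beyond the binomial ones. This is exactly what the Möbius-inversion argument controls, and it hinges both on $\Sigma$ being rich enough (all nonempty supports occur) and on the Dedekind-style linear independence of characters used to collapse each orbit condition to a single scalar equation. I would take particular care to verify these two facts, since they are what make the naive count of relations tight and thereby rule out non-binomial generators.
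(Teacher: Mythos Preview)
The paper does not prove this statement; it is quoted from \c{S}ahin's preprint and used as a black box, with the characterization of which binomials lie in $\calI(\PP(w)(\fq))$ stated immediately afterward. Your argument is a correct, self-contained proof, and in fact it yields that characterization as a by-product: the binomials your proof produces are exactly the $x^\alpha-x^\beta$ with $\supp(\alpha)=\supp(\beta)$ and $\alpha\equiv\beta\pmod{q-1}$, matching the statement the paper attributes to \cite[Theorem~3.7]{mesutComputingVanishing}.

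A few remarks on the details. Your preliminary computation that every $\fq$-point has an $\fq$-representative is correct and shows $\hat V=\fq^{m+1}\setminus\{0\}$, so $\Sigma$ is indeed all nonempty subsets. The torus-invariance of $\hat V$ under $(\fq^*)^{m+1}$ then gives the $(\Z/(q-1)\Z)^{m+1}$-grading on $I(\hat V)$, and hence on its weighted-homogeneous part $\calI(\PP(w)(\fq))$. The step you flagged as delicate is fine: since $\Sigma$ contains every nonempty subset and, for $d\ge1$, no monomial has empty support, the M\"obius inversion $v_\tau=\sum_{\emptyset\ne\sigma\subseteq\tau}(-1)^{|\tau|-|\sigma|}u_\sigma$ goes through (the missing $\sigma=\emptyset$ term would be zero anyway), so $\Span\{u_\sigma\}=\Span\{v_\tau\}$. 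Orthogonality to the $v_\tau$ gives precisely the conditions $\sum_{\supp(\alpha)=\tau}c_\alpha=0$, whose solution space is spanned by $e_\alpha-e_\beta$ with equal support; the dimension count is $|M_{c,d}|-|T|$ on both sides, so no extra relations appear. This is the classical Eisenbud--Sturmfels style route to binomiality via a diagonal group action, specialized to the finite torus $(\fq^*)^{m+1}$; what you gain over merely citing the result is an explicit spanning set of binomials in each bidegree, which is more than the paper needs but entirely consistent with the subsequent discussion.
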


Binomials in $\calI(\mathbb{P}(w)(\fq))$ are easy to characterize \cite[Theorem 3.7]{mesutComputingVanishing}:
\[ x_0^{a_0}\cdots x_m^{a_m} - x_0^{b_0}\cdots x_m^{b_m} \in \calI(\mathbb{P}(w)(\fq))\]
if and only if $\set{i\in\set{0,\dots,m} : a_i = 0} = \set{i\in\set{0,\dots,m} : b_i = 0}$ and $(q-1)$ divides $b_i-a_i$ for every $i \in \set{0,\dots,m}$. By the previous characterization, for $0 \leq i < j \leq m$, the binomial
\begin{equation}\label{eq:Bij}
B_{i,j}:=x_ix_j\left(x_j^{w_i(q-1)}-x_i^{w_j(q-1)}\right)
\end{equation}
belongs to  
$\calI(\mathbb{P}(w)(\fq))$ with 
\begin{equation}\label{eq:deg_Bij}
    \deg(B_{i,j})=w_iw_j(q-1)+w_i+w_j.
\end{equation}
When $w=(1,\dots,1)$, or more generally when $w_0=\dots=w_{m-2}=1$, the ideal $\calI(\mathbb{P}(w)(\fq))$ is generated by the set of binomials $\set{B_{i,j} : 0 \leq i < j \leq m}$  \cite[Theorem~5.7]{mesutComputingVanishing}. 
For more general weights, this may not be the case anymore, and finding a minimal generating set of binomials for $\calI(\mathbb{P}(w)(\fq))$ is more intricate, as it depends on the relations between the weights (see \cite[Proposition~5.10]{mesutComputingVanishing} for $m=2$ with $w_0 \neq 1$).

We consider the degree lexicographic order with $x_0 < x_1 < \dots< x_m$. Let
\begin{equation}
    \Mon:=\set{x_0^{a_0}\cdots x_m^{a_m} \in \M : \forall f \in  \calI(\mathbb{P}(w)(\fq)) \text{ homogeneous}, \: \ini(f) \nmid x_0^{a_0}\cdots x_m^{a_m} }
\end{equation}
and $\Mon_d:=\Mon \cap \M_d$. We can write
\[\Mon=\bigsqcup_{d \geq 0} \Mon_d.\]
Then the quotient ring $\fq[x_0,\dots,x_m]^w /\calI(\mathbb{P}(w)(\fq))$ is generated by $\Mon$ as an $\fq$-vector space \cite[Thm. 15.3]{eisenbud}. For any $d \geq 0$ and $0\leq i \leq m$, we define
\begin{equation}\label{eq:Mondi}
    \Moni{d}{i}:=\set{x_0^{a_0}\cdots x_m^{a_m} \in \Mon_d  : \forall t < i, \: a_t = 0 \text{ and } a_i\geq 1},
\end{equation}
which makes a partition of $\Mon_d=\bigsqcup_{i=0}^m \Moni{d}{i}$. Similarly, we set
\begin{equation}
    \Moni{}{i}:=\bigsqcup_{d \geq 0} \Moni{d}{i}.
\end{equation}

\begin{rem}
The notation we are considering is analogous to the one used in \cite{projectivefootprint} for projective reductions, but the ordering of the variables is reversed.
\end{rem}

As mentioned in the introduction, we assume that 
\[1=w_0 \leq w_1=\dots=w_\ell < w_{\ell+1} \leq \cdots \leq w_m\]
with $1\leq \ell \leq m-1$, which implies $w_m\geq 2$ (recall that for the case $1=w_0=\cdots=w_m$, Theorem \ref{t:theorem_conjecture_intro} is Serre's bound). We give now some basic facts about the sets of monomials we have just introduced and the regularity set of $\PP(w)(\fq)$.

\begin{lem}\label{lem:form_monomials}
Let $x_{j_0}^{a_{j_0}}\cdots x_m^{a_m} \in \Moni{}{j_0}$.
If $w_{j_0}=1$, then $0\leq a_j \leq q-1$ for every $j \in \set{j_0+1,\dots, m}$.    
\end{lem}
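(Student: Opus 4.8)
The plan is to use the explicit binomials $B_{j_0,j}\in\calI(\PP(w)(\fq))$ from \eqref{eq:Bij} together with the defining property of $\Mon$. Fix $j\in\set{j_0+1,\dots,m}$ and recall that
\[
B_{j_0,j}=x_{j_0}x_j\left(x_j^{w_{j_0}(q-1)}-x_{j_0}^{w_j(q-1)}\right)
\]
is a homogeneous element of $\calI(\PP(w)(\fq))$. Using the hypothesis $w_{j_0}=1$, it rewrites as $B_{j_0,j}=x_{j_0}x_j^{q}-x_{j_0}^{1+w_j(q-1)}x_j$.

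The first step is to determine $\ini(B_{j_0,j})$ for the degree lexicographic order with $x_0<x_1<\dots<x_m$. Its two monomials share the same weighted degree $1+qw_j$ (as they must, $B_{j_0,j}$ being homogeneous), so the order is decided lexicographically; since $j>j_0$ and both monomials involve only $x_{j_0}$ and $x_j$, the comparison is settled at $x_j$, whose exponent is $q\geq 2$ in the first monomial and $1$ in the second. Hence $\ini(B_{j_0,j})=x_{j_0}x_j^{q}$.

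The second step is to conclude. Let $x_{j_0}^{a_{j_0}}\cdots x_m^{a_m}\in\Moni{}{j_0}$. By definition of $\Moni{}{j_0}$ we have $a_{j_0}\geq 1$, and by definition of $\Mon$ the monomial $\ini(B_{j_0,j})=x_{j_0}x_j^{q}$ does not divide $x_{j_0}^{a_{j_0}}\cdots x_m^{a_m}$. Since $a_{j_0}\geq 1$, this forces $a_j\leq q-1$. As $j$ was arbitrary in $\set{j_0+1,\dots,m}$, the lemma follows.

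I do not expect a serious obstacle here: this is a direct application of the standard monomial formalism. The only delicate point is the computation of $\ini(B_{j_0,j})$, which is precisely where the assumption $w_{j_0}=1$ enters — for general $w_{j_0}$ the same reasoning would only yield $a_j\leq w_{j_0}(q-1)$, so this normalization is what produces the sharp bound $q-1$ and makes the comparison of the two terms of $B_{j_0,j}$ come out the way claimed.
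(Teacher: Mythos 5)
Your proof is correct and rests on the same ingredient as the paper's: the binomial $B_{j_0,j}\in\calI(\PP(w)(\fq))$ together with the definition of $\Mon$ via initial terms. The paper phrases it as an explicit reduction (for $a_j\geq q$ it exhibits a binomial in the ideal whose leading term is the monomial itself), whereas you argue directly from $\ini(B_{j_0,j})=x_{j_0}x_j^q\nmid x_{j_0}^{a_{j_0}}\cdots x_m^{a_m}$; this is the same argument in a slightly more streamlined form.
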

\begin{proof}
    Assume that $a_j \geq q$ for some $j \geq j_0+1$.
    Set $\rho=\Floorfrac{a_j-1}{q-1}\geq 1$.
    As the binomial $B_{j_0,j}=x_{j_0}x_j\left(x_j^{q-1}-x_{j_0}^{w_j(q-1)}\right)$ lies in $\calI(\mathbb{P}(w)(\fq))$, we have 
    \[x_{j_0}^{a_{j_0}}\cdots x_m^{a_m} - x_{j_0}^{a_{j_0}+\rho w_j(q-1)}\cdots x_j^{a_j-\rho(q-1)} \cdots x_m^{a_m}\in \calI(\PP(w)(\fq)).\]
    One can check that $0 \leq a_j-\rho(q-1) \leq q-1$. Repeating this for every $j \in \set{j_0+1,\dots,m}$ gives the result.
\end{proof}

\begin{defn}\label{def:reg}
The {\it regularity set} of $\PP(w)(\fq)$ is defined as 
    \begin{equation*}\label{eq:def_reg}
    \reg (\PP(w)(\fq)):=\{d \in \mathbb{N}_{\geq 1}: H_{\PP(w)(\fq)}(d)=\abs{\PP(w)(\fq)}=p_m\},
    \end{equation*}
where $H_{\PP(w)(\fq)}(d):=\dim\fq[x_0,\dots,x_m]^w_d/\calI(\PP(w)(\fq))_d=\abs{\Mon_d}$ denotes the value of the Hilbert function of $\fq[x_0,\dots,x_m]^w/\calI(\PP(w)(\fq))$ at degree $d$. 
\end{defn}

We do not know $\reg (\PP(w)(\fq))$ in general, although it has been computed for the particular case of $\PP(1,w_1,w_2)$ in \cite{caki_sahin_nardi25}. Nevertheless, for our purposes it is sufficient to know that this set is non-empty and infinite. Indeed, it is non-empty by \cite[Prop. 4.4]{maclagan_multigraded_regularity} and the characterization given in \cite[Prop. 6.7]{maclagan_multigraded_regularity} (the introduction of \cite{maclagan_multigraded_regularity} explains the results for projective toric varieties, and, in particular, Example 2.1 deals with $\PP(w)$).
That $\reg(\PP(w)(\fq))$ is infinite can be deduced from the fact that, if $\tilde{d}\in \reg(\PP(w)(\fq))$, then $p_m=H_{\PP(w)(\fq)}(\tilde{d})\leq H_{\PP(w)(\fq)}(\tilde{d}+\lcm(w)(q-1))\leq p_m$. Indeed, the last inequality follows from \cite[Thm. 3.7]{sahin_soprunov_multigraded_hilbert_functions}, and the first from the next lemma.

\begin{lem}
Let $d\geq 1$. Then $H_{\PP(w)(\fq)}(d)\leq H_{\PP(w)(\fq)}(d+\lcm(w)(q-1))$.
\end{lem}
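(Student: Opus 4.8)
Since $H_{\PP(w)(\fq)}(d)=\abs{\Mon_d}$, it suffices to exhibit an injection $\psi\colon \Mon_d \hookrightarrow \Mon_{d+e}$, where $e:=\lcm(w)(q-1)$. Given $\mu \in \Mon_d$, let $x_{j_0}$ be its least variable (it has one, as $d\ge 1$), so $\mu \in \Moni{}{j_0}$, and set $\psi(\mu):=\mu\, x_{j_0}^{\,e/w_{j_0}}$; this is a monomial of degree $d+e$ because $w_{j_0}\mid \lcm(w)\mid e$. The map $\psi$ is injective: from $\psi(\mu)$ one reads off $j_0$ as the index of the least variable occurring, and then $\mu=\psi(\mu)/x_{j_0}^{\,e/w_{j_0}}$. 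So the whole problem reduces to showing $\psi(\mu)\in\Mon_{d+e}$, i.e.\ that standardness of a monomial is preserved under raising the exponent of its least variable. Raising that exponent does not change which variable is least, so by iteration it is enough to prove that $\mu \in \Moni{}{j_0}$ implies $\mu\,x_{j_0}\in\Moni{}{j_0}$.

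I would deduce this from the claim that \emph{no minimal generator of the monomial ideal $\ini(\calI(\PP(w)(\fq)))$ has its least variable occurring with exponent $\ge 2$}. Indeed, grant the claim and suppose $\mu\, x_{j_0}\in\ini(\calI(\PP(w)(\fq)))$ for some $\mu=x_{j_0}^{a}\bar\mu\in\Moni{}{j_0}$ (with $a\ge 1$ and $\bar\mu$ in the variables $>j_0$). Some minimal generator $\nu$ divides $\mu\,x_{j_0}=x_{j_0}^{a+1}\bar\mu$; as $\mu\,x_{j_0}$ involves no variable $<x_{j_0}$, neither does $\nu$, and by the claim $\nu$ has $x_{j_0}$-exponent at most $1\le a$. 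Hence $\nu$ already divides $\mu$, so $\mu\notin\Mon$, a contradiction; therefore $\mu\,x_{j_0}$ is standard, and clearly still has least variable $x_{j_0}$.

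To prove the claim, recall that $\calI(\PP(w)(\fq))$ is binomial (Theorem \ref{thm:binomial}), so it has a reduced Gr\"obner basis consisting of binomials; every minimal generator of $\ini(\calI(\PP(w)(\fq)))$ is the leading monomial $x^\alpha$ of such a binomial $x^\alpha-x^\beta$, and, the basis being reduced, the trailing monomial $x^\beta$ is standard. Suppose the least variable $x_{j_0}$ of $x^\alpha$ occurs with exponent $c:=\alpha_{j_0}\ge 2$. From the description of binomials in $\calI(\PP(w)(\fq))$ (equal supports, exponents congruent mod $q-1$) one gets $\beta_{j_0}\ge 1$ and $(q-1)\mid(\beta_{j_0}-c)$; combined with $\deg(x^\alpha)=\deg(x^\beta)$, factoring the maximal power of $x_{j_0}$ out of $x^\alpha-x^\beta$ and a short case analysis on $\beta_{j_0}$ shows that, unless $\beta_{j_0}=1$, one produces a binomial of $\calI(\PP(w)(\fq))$ whose leading monomial is a proper divisor of $x^\alpha$ — contradicting minimality. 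So $\beta_{j_0}=1$, and $(q-1)\mid(c-1)$ forces $c\ge q$. Write $x^\alpha=x_{j_0}^{c}g$, $x^\beta=x_{j_0}h$ with $g,h$ monomials in $x_{j_0+1},\dots,x_m$ of common support. Since $x^\beta$ is standard and lies in $\Moni{}{j_0}$, and $\ini(B_{j_0,i})=x_{j_0}x_i^{w_{j_0}(q-1)+1}$ by \eqref{eq:Bij}, every exponent of $h$ is $\le w_{j_0}(q-1)$; likewise, minimality of $x^\alpha$ prevents divisibility by $\ini(B_{j_0,i})$, so every exponent of $g$ is $\le w_{j_0}(q-1)$. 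From $\deg(x^\alpha)=\deg(x^\beta)$ we get $\deg(h)-\deg(g)=(c-1)w_{j_0}>0$, so $g\ne h$; at the greatest index $j^*$ where $g$ and $h$ differ, $\deg$lex-maximality of $x^\alpha$ forces $g_{j^*}>h_{j^*}$, and since $j^*$ is in the common support and $g_{j^*}\equiv h_{j^*}\pmod{q-1}$ we get $g_{j^*}\ge h_{j^*}+(q-1)\ge q$. If $w_{j_0}=1$ this contradicts $g_{j^*}\le w_{j_0}(q-1)=q-1$ (this case is also immediate from Lemma \ref{lem:form_monomials}).

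The case $w_{j_0}\ge 2$ — where $q\le w_{j_0}(q-1)$ is not yet absurd — is the step I expect to be the main obstacle. One must re-examine the degree balance $(c-1)w_{j_0}=\sum_{i>j_0}w_i(h_i-g_i)$: the negative term $w_{j^*}(h_{j^*}-g_{j^*})$ has to be compensated by indices $i$ with $j_0<i<j^*$ and $h_i>g_i$, whose exponents $g_i,h_i$ are again pinched between $1$ and $w_{j_0}(q-1)$; iterating this analysis of where the positive contributions can sit, using monotonicity of the weights and these uniform bounds, should exhaust all room and close the argument. (Multiplying instead by a single form of degree $e$ not vanishing on $\PP(w)(\fq)$ would be cleaner but is not an option, since such a form need not exist — e.g.\ none exists for $\PP(1,2)$ over $\F_2$.) Once the claim is proven, $\psi$ lands in $\Mon_{d+e}$ and the inequality follows.
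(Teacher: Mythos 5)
Your map $\psi(\mu)=\mu\,x_{j_0}^{\lcm(w)(q-1)/w_{j_0}}$ is exactly the one the paper uses, but the route you take from there has a genuine gap that you yourself flag: the claim that no minimal generator of $\ini(\calI(\PP(w)(\fq)))$ has its least variable appearing with exponent $\geq 2$ is only established when that variable has weight $1$ (where it indeed follows from Lemma \ref{lem:form_monomials}); for $w_{j_0}\geq 2$ you offer a plan, not a proof, since $g_{j^*}\geq q$ no longer contradicts $g_{j^*}\leq w_{j_0}(q-1)$ and the proposed ``iterate the degree balance'' step is not carried out. Closing it would amount to a structural statement about the reduced Gr\"obner basis of $\calI(\PP(w)(\fq))$ for arbitrary weights, which the paper explicitly describes as intricate and unknown beyond small cases, so the argument cannot be accepted as it stands.

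The deeper issue is that you are proving more than the lemma needs. Since $H_{\PP(w)(\fq)}(d+e)=\dim \fq[x_0,\dots,x_m]^w_{d+e}/\calI(\PP(w)(\fq))_{d+e}$, it suffices that the $\abs{\Mon_d}$ monomials $\psi(\mu)$ be linearly independent modulo $\calI(\PP(w)(\fq))$ --- they need not be standard. This is the paper's argument: because the extra exponent $e/w_{j_0}$ is a multiple of $q-1$ and $\mu$ already contains $x_{j_0}$ to a positive power, $\psi(\mu)$ and $\mu$ take the same value at every $\fq$-representative of every point of $\PP(w)(\fq)$ (using $t^{a+k(q-1)}=t^{a}$ for $t\in\fq$ and $a\geq 1$). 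Hence a nontrivial relation $\sum c_\mu\,\psi(\mu)\in\calI(\PP(w)(\fq))$ would give $\sum c_\mu\,\mu$ vanishing on all of $\PP(w)(\fq)$, contradicting the linear independence of $\Mon_d$ modulo the ideal. Your injectivity observation and choice of exponent are correct; replacing the standardness claim by this evaluation argument repairs the proof.
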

\begin{proof}
Recall that $H_{\PP(w)(\fq)}(d)=\abs{\Mon_{d}}$. Thus, it is enough to prove that the set
\[\bigcup_{i=0}^m x_i^{(q-1)\lcm(w)/w_i} \cdot \Moni{d}{i}\subset  \fq[x_0,\dots,x_m]^w_{d+(q-1)\lcm(w)}\]
of cardinality $\abs{\Mon_{d}}$ is linearly independent modulo $\calI(\PP(w)(\fq))$. Arguing by contradiction, assume that it is linearly dependent. This is equivalent to having a linear combination which vanishes at all the points of $\PP(w)(\fq)$. This, in turn, would give a linear combination of monomials in $\bigcup_{i=0}^m \Moni{d}{i}$ vanishing at all the points of $\PP(w)(\fq)$. Indeed, given $x_i^{a_i}\cdots x_m^{a_m} \in  \Moni{d}{i}$ (i.e., $a_i\geq 1$), we have $x_i^{a_i}\cdots x_m^{a_m}(P)=(x_i^{(q-1)\lcm(w)/w_i} x_i^{a_i}\cdots x_m^{a_m})(P)$ for all $P\in \PP(w)(\fq)$. But this is a contradiction, since $\Mon_d$ is linearly independent in $\calI(\PP(w)(\fq))$. 
\end{proof}

\begin{lem}\label{lem:Mondi_d_reg}
We have that $\tilde{d} \in \reg(\mathbb{P}(w)(\fq))$ if and only if $\size{\Moni{\tilde{d}}{i}}=q^{m-i}$ for every $0\leq i \leq m$.
\end{lem}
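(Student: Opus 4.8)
The plan is to reduce everything to a single inequality, namely that
\[
\size{\Moni{d}{i}}\leq q^{m-i}\qquad\text{for all }d\geq 1\text{ and all }0\leq i\leq m,
\]
and then combine it with the partition $\Mon_d=\bigsqcup_{i=0}^m\Moni{d}{i}$ (valid for $d\geq 1$) and the identity $p_m=\sum_{i=0}^m q^{m-i}$. Granting the inequality, for any $\tilde d\geq 1$ we get $H_{\PP(w)(\fq)}(\tilde d)=\size{\Mon_{\tilde d}}=\sum_{i=0}^m\size{\Moni{\tilde d}{i}}\leq\sum_{i=0}^m q^{m-i}=p_m$, and since $H_{\PP(w)(\fq)}(\tilde d)\leq p_m$ always, the value $p_m$ is attained (i.e.\ $\tilde d\in\reg(\PP(w)(\fq))$) exactly when equality holds in every summand, which is precisely the condition $\size{\Moni{\tilde d}{i}}=q^{m-i}$ for all $i$. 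The reverse implication is then immediate by summing. So the whole content of the lemma is the displayed per-index bound.

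To prove that bound I would introduce, for $0\leq i\leq m$, the set $U_i$ of points $P\in\PP(w)(\fq)$ whose first nonzero coordinate is the $i$-th one, i.e.\ $x_0(P)=\dots=x_{i-1}(P)=0$ and $x_i(P)\neq 0$; these partition $\PP(w)(\fq)$. The locus $\{x_0=\dots=x_{i-1}=0\}$ in $\PP(w)$ is, over $\fq$, the weighted projective space $\PP(w_i,\dots,w_m)$, so by the fact recalled in the introduction (the number of rational points of a WPS does not depend on the weights) it has $p_{m-i}$ rational points, where $p_k=\frac{q^{k+1}-1}{q-1}$; hence $\size{U_i}=p_{m-i}-p_{m-i-1}=q^{m-i}$ (with $p_{-1}:=0$). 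After fixing one representative for each point of $U_i$, it therefore suffices to show that if $g=\sum_{\mu\in\Moni{d}{i}}c_\mu\mu$ vanishes at every point of $U_i$, then all $c_\mu=0$; this gives $\size{\Moni{d}{i}}\leq\size{U_i}=q^{m-i}$.

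For this last step, I claim such a $g$ in fact vanishes on all of $\PP(w)(\fq)$. Every monomial of $g$ is divisible by $x_i$, so $g$ vanishes wherever $x_i=0$. If $P\in\PP(w)(\fq)$ has a representative $(x_0,\dots,x_m)$ with $x_i\neq 0$, set $P'=(0:\dots:0:x_i:\dots:x_m)$: Frobenius-invariance of $P$ gives $\lambda\in\overline{\fq}^*$ with $x_j^q=\lambda^{w_j}x_j$ for all $j$, and the same $\lambda$ witnesses $P'\in\PP(w)(\fq)$; moreover $P'\in U_i$, and since $g$ does not involve $x_0,\dots,x_{i-1}$ we get $g(P)=g(x_0,\dots,x_m)=g(0,\dots,0,x_i,\dots,x_m)=g(P')=0$. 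Thus $g$ is a homogeneous polynomial vanishing on $\PP(w)(\fq)$, hence $g\in\calI(\PP(w)(\fq))$; being an $\fq$-linear combination of standard monomials, which are linearly independent modulo $\calI(\PP(w)(\fq))$ by \cite[Thm.~15.3]{eisenbud}, we conclude $g=0$, i.e.\ all $c_\mu=0$.

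The main obstacle is exactly that middle step — upgrading ``vanishes on $U_i$'' to ``vanishes on $\PP(w)(\fq)$'' — and it rests on two small observations: the partial specialization $(x_0:\dots:x_m)\mapsto(0:\dots:0:x_i:\dots:x_m)$ sends $\fq$-rational points to $\fq$-rational points (the same scaling parameter $\lambda$ certifies Frobenius-invariance on both sides), and the monomials in $\Moni{d}{i}$ are both divisible by $x_i$ and independent of the variables $x_0,\dots,x_{i-1}$. Once these are in place, the rest is bookkeeping with the partition of $\Mon_{\tilde d}$ and the geometric sum $\sum_{i=0}^m q^{m-i}=p_m$.
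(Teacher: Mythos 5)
Your proof is correct, and its skeleton --- reduce the equivalence to the single inequality $\size{\Moni{d}{i}}\leq q^{m-i}$, then sum over the partition $\Mon_{\tilde{d}}=\bigsqcup_{i=0}^m\Moni{\tilde{d}}{i}$ against $p_m=\sum_{i=0}^m q^{m-i}$ --- is exactly the paper's. Where you diverge is in how that inequality is justified. The paper dismisses it as an immediate consequence of the definition of $\Moni{\tilde{d}}{i}$; the implicit argument is combinatorial: for a standard monomial $x_i^{a_i}\cdots x_m^{a_m}$ of degree $\tilde{d}$ the exponent $a_i$ is determined by $(a_{i+1},\dots,a_m)$, and each $a_j$ with $j>i$ is confined to $\{0,\dots,q-1\}$, giving at most $q^{m-i}$ choices. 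Note, though, that this confinement is only established in the paper (Lemma~\ref{lem:form_monomials}) under the hypothesis $w_{j_0}=1$, since $\ini(B_{i,j})=x_ix_j^{w_i(q-1)+1}$ only forces $a_j\leq w_i(q-1)$ in general. Your evaluation argument --- standard monomials are linearly independent as functions on $\PP(w)(\fq)$, elements of $\Span\Moni{d}{i}$ are determined by their restriction to the stratum $U_i$ of points whose first nonzero coordinate is the $i$-th, and $\size{U_i}=p_{m-i}-p_{m-i-1}=q^{m-i}$ --- sidesteps this and treats all $i$ uniformly, so it is arguably more robust; in particular your upgrade from ``vanishes on $U_i$'' to ``vanishes on $\PP(w)(\fq)$,'' including the check that the truncated point $(0:\dots:0:x_i:\dots:x_m)$ is still Frobenius-invariant via the same $\lambda$, is correct. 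The one fact you use tacitly is that every point of $U_i$ admits a representative with coordinates in $\fq$, so that evaluation lands in $\fq^{U_i}$ and the dimension count closes; this is standard for weighted projective spaces over finite fields (write the scalar $\lambda$ certifying Frobenius-invariance as $\mu^{1-q}$ and rescale the representative by $\mu$) and is used implicitly throughout this literature, but it deserves a sentence.
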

\begin{proof}
On the one hand, the definition of the regularity set (Definition~\ref{def:reg}) means that
\[
\abs{\Mon_{\tilde{d}}}=\dim\left(\fq[x_0,\dots,x_m]^w_d/\calI(\PP(w)(\fq))_d\right)=p_m=\sum_{i=0}^m \size{\Moni{\tilde{d}}{i}}.
\]
On the other hand, the definition of $\Moni{\tilde{d}}{i}$ (Equation~\eqref{eq:Mondi}) entails that $\size{\Moni{\tilde{d}}{i}}\leq q^{m-i}$, which gives the result.
\end{proof}

We now introduce one of the main tools for our proof.

\begin{defn}\label{d:FB}
    Let $d \geq 1$ and $\tilde{d} \in \reg(\mathbb{P}(w)(\fq))$ such that $\tilde{d}\gg d$. Let $x_0^{a_0}\cdots x_m^{a_m} \in \Mon_d$ and $i \in \set{0,\dots,m}$.
    We define the \emph{footprint bound} associated to the monomial $x_0^{a_0}\cdots x_m^{a_m}$ as
\[\FB(x_0^{a_0}\cdots x_m^{a_m}):=\size{\set{x_0^{b_0} \dots x_m^{b_m} \in \Mon_{\tilde{d}} : x_0^{a_0}\cdots x_m^{a_m} \text{ divides } x_0^{b_0} \dots x_m^{b_m} }}.
    \]
    We also define, for every $i \in \set{0,\dots,m}$,    
    \[\FBi{i}(x_0^{a_0}\cdots x_m^{a_m}):=\size{\set{x_i^{b_i} \dots x_m^{b_m} \in \Moni{\tilde{d}}{i} : x_0^{a_0}\cdots x_m^{a_m} \text{ divides } x_i^{b_i} \dots x_m^{b_m} }}.
    \]
    \end{defn}

The previous definition is motivated by the following result, which is a consequence of \cite[Thm. 5.2]{nardi_projective_toric} and can also be found in \cite[Lem. 4.2]{caki_sahin_nardi25}. 

\begin{thm}
Let $f\in \fq[x_0,\dots,x_m]^w_d\setminus I_d(\PP(w))$. Then
$$
\abs{V_{\PP(w)}(f)(\fq)}\leq p_m- \FB(\ini(f)).
$$
\end{thm}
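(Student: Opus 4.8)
The plan is to run the classical Gröbner-basis footprint argument in the weighted graded ring $\fq[x_0,\dots,x_m]^w$ at a single large regularity degree. Fix, as in Definition~\ref{d:FB}, a degree $\tilde d\in\reg(\PP(w)(\fq))$ with $\tilde d\gg d$, set $Z=V_{\PP(w)}(f)(\fq)$, and write $\calI$ and $\calI(Z)$ for the (homogeneous) vanishing ideals of $\PP(w)(\fq)$ and of $Z$; since $w_0=1$ we have $\gen{w}_\NN=\NN$, so $\fq[x_0,\dots,x_m]^w_{\tilde d-d}\neq 0$. The two dimension counts I would record first are $\dim_\fq\fq[x_0,\dots,x_m]^w_{\tilde d}/\calI_{\tilde d}=|\Mon_{\tilde d}|=p_m$ (this is $\tilde d\in\reg(\PP(w)(\fq))$, via the Hilbert-function description recalled before Definition~\ref{def:reg}) and $\dim_\fq\fq[x_0,\dots,x_m]^w_{\tilde d}/\calI(Z)_{\tilde d}=|Z|$ (a regularity degree of $\PP(w)(\fq)$ separates its rational points, hence also those of the subset $Z$, so $\tilde d$ is a regularity degree for $Z$ too).

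Since $f$ vanishes on $Z$ we have $\calI_{\tilde d}+f\cdot\fq[x_0,\dots,x_m]^w_{\tilde d-d}\subseteq\calI(Z)_{\tilde d}$, and therefore
\[
|V_{\PP(w)}(f)(\fq)|=|Z|\le\dim_\fq\fq[x_0,\dots,x_m]^w_{\tilde d}\big/\bigl(\calI_{\tilde d}+f\cdot\fq[x_0,\dots,x_m]^w_{\tilde d-d}\bigr)=p_m-\dim_\fq W,
\]
where $W\subseteq\fq[x_0,\dots,x_m]^w_{\tilde d}/\calI_{\tilde d}$ denotes the image of the multiplication map $f\cdot\fq[x_0,\dots,x_m]^w_{\tilde d-d}$. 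So the whole statement reduces to the inequality $\dim_\fq W\ge\FB(\ini(f))$.

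To prove that, I would enumerate the $N:=\FB(\ini(f))$ monomials of $\Mon_{\tilde d}$ divisible by $\ini(f)$ as $\mu_1>\mu_2>\dots>\mu_N$ in the degree lexicographic order and write $\mu_k=\ini(f)\,\nu_k$ with $\nu_k$ a monomial of degree $\tilde d-d$. The claim is that the classes $\overline{f\nu_1},\dots,\overline{f\nu_N}\in W$ are $\fq$-linearly independent. Indeed, since the term order is multiplicative, $\ini(f\nu_k)=\ini(f)\,\nu_k=\mu_k$, which is a standard monomial; reducing $f\nu_k$ modulo a (homogeneous) Gröbner basis of $\calI$ hence never touches this leading term, so the normal form of $f\nu_k$ is $\lc(f)\,\mu_k$ plus an $\fq$-linear combination of standard monomials strictly smaller than $\mu_k$. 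Because $\lc(f)\neq0$ and the leading standard monomials $\mu_1>\dots>\mu_N$ are distinct, these $N$ normal forms are triangular with respect to the term order, hence linearly independent, and $\dim_\fq W\ge N=\FB(\ini(f))$. Combining this with the previous display finishes the proof. (Alternatively, the statement is the specialization of the general toric footprint bound \cite[Thm.~5.2]{nardi_projective_toric} to the simplicial projective toric variety $\PP(w)$; see also \cite[Lem.~4.2]{caki_sahin_nardi25}.)

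The step that really uses something beyond bookkeeping is the identity $\dim_\fq\fq[x_0,\dots,x_m]^w_{\tilde d}/\calI(Z)_{\tilde d}=|Z|$: in a weighted projective space it is not automatic that one fixed degree separates the rational points, which is exactly why the argument must be performed at a degree $\tilde d$ lying in $\reg(\PP(w)(\fq))$ — a set shown to be nonempty and infinite in the discussion preceding Lemma~\ref{lem:Mondi_d_reg}, so that $\tilde d\gg d$ can be arranged. The Gröbner-reduction and triangularity arguments are then entirely routine.
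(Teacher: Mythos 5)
Your proof is correct. Note that the paper does not actually prove this statement: it is presented as a direct consequence of \cite[Thm.~5.2]{nardi_projective_toric} (see also \cite[Lem.~4.2]{caki_sahin_nardi25}), so there is no in-paper argument to compare against. What you have written is a sound, self-contained version of the standard footprint argument that those references run: the chain $\calI_{\tilde d}+f\cdot\fq[x_0,\dots,x_m]^w_{\tilde d-d}\subseteq\calI(Z)_{\tilde d}$, the identification $\dim\fq[x_0,\dots,x_m]^w_{\tilde d}/\calI(Z)_{\tilde d}=|Z|$ via surjectivity of the evaluation map at a regularity degree, and the triangularity of the normal forms of the $f\nu_k$ with distinct standard leading monomials $\mu_k=\ini(f)\nu_k$ are all correctly justified. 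You were also right to flag the separation of points as the one step that genuinely needs $\tilde d\in\reg(\PP(w)(\fq))$; that is exactly why the definition of $\FB$ is anchored at such a degree.
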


The following elementary properties of $\FBi{i}$ will be used throughout the next section.

\begin{lem}\label{lem:FBe}
Let $d \geq 0$ and fix $0 \leq j_0 \leq m$. Let $x_{j_0}^{a_{j_0}}\cdots x_m^{a_m} \in \Moni{d}{j_0}$. 
    \begin{enumerate}[label=(\roman*)]
        \item For every $i \geq j_0+1$, we have $\FBi{i}(x_{j_0}^{a_{j_0}}\cdots x_m^{a_m})=0$.
        \item\label{it:sum_FB} $\FB(x_{j_0}^{a_{j_0}}\cdots x_m^{a_m})=\sum_{i=0}^{j_0} \FBi{i}(x_{j_0}^{a_{j_0}}\cdots x_m^{a_m})$. 
        \item\label{it:FBe}
        Assume that $d \leq w_1q$. For every $i  \leq j_0$ such that $w_0=\dots=w_i=1$, we have
    \[\FBi{i}(x_{j_0}^{a_{j_0}}\cdots x_m^{a_m}) = \prod_{j=i+1}^m(q-a_j).\]
     \end{enumerate}
\end{lem}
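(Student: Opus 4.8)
The three parts describe the structure of the footprint bound in terms of the "block" decomposition $\Moni{\tilde d}{i}$. I would handle them in the stated order, since (ii) uses (i) and (iii) is largely independent but uses the combinatorial description of $\Moni{\tilde d}{i}$.

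For part (i): fix a monomial $x_{j_0}^{a_{j_0}}\cdots x_m^{a_m}\in\Moni{d}{j_0}$, so in particular $a_{j_0}\geq 1$ and $a_t=0$ for $t<j_0$. For $i\geq j_0+1$, any monomial $x_i^{b_i}\cdots x_m^{b_m}\in\Moni{\tilde d}{i}$ has $b_t=0$ for $t<i$, in particular $b_{j_0}=0<a_{j_0}$, so $x_{j_0}^{a_{j_0}}\cdots x_m^{a_m}$ cannot divide it. Hence the defining set of $\FBi{i}$ is empty. This is immediate.

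For part (ii): recall $\Mon_{\tilde d}=\bigsqcup_{i=0}^m\Moni{\tilde d}{i}$, so the set counted by $\FB(x_{j_0}^{a_{j_0}}\cdots x_m^{a_m})$ splits as a disjoint union over $i$ of the sets counted by $\FBi{i}$. By part (i) the summands with $i\geq j_0+1$ vanish, leaving $\FB=\sum_{i=0}^{j_0}\FBi{i}$. Again routine once the partition is invoked.

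Part (iii) is the real content. Fix $i\leq j_0$ with $w_0=\dots=w_i=1$, and let $x_i^{b_i}\cdots x_m^{b_m}\in\Moni{\tilde d}{i}$, so $b_t=0$ for $t<i$ and $b_i\geq 1$. The divisibility $x_{j_0}^{a_{j_0}}\cdots x_m^{a_m}\mid x_i^{b_i}\cdots x_m^{b_m}$ amounts to $b_j\geq a_j$ for all $j\geq j_0$ (and $b_j\geq 0$ for $i\leq j<j_0$, which is automatic). The plan is to show that the map sending such a divisible monomial to the tuple $(b_{j_0+1},\dots,b_m)$ is a bijection onto $\prod_{j=j_0+1}^m\{a_j,a_{j+1}+1,\dots,?\}$ — more precisely, I expect that for $\tilde d$ regular and large, once the exponents $b_{i+1},\dots,b_m$ on the higher-weight-or-equal variables are chosen (subject to $0\le b_j\le q-1$ for the tail by Lemma \ref{lem:form_monomials}, and $b_j\ge a_j$ for $j\ge j_0$), the exponent $b_i$ of the weight-one variable $x_i$ is uniquely determined by the degree constraint $\deg=\tilde d$, and that this $b_i$ is automatically $\geq 1$. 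The hypotheses $d\le w_1 q$ and $\tilde d\gg d$ are what guarantee this: they force the "free" exponents $b_{i+1},\dots,b_m$ to range over the full box $\prod_{j}\{a_j,\dots,q-1\}$ without additional truncation, of size $\prod_{j=i+1}^m(q-a_j)$ (note $a_j=0$ for $i<j<j_0$ since those are zero in the original monomial when $j<j_0$, wait — the original monomial starts at $x_{j_0}$, so indeed $a_j=0$ for $i\le j<j_0$, and the product $\prod_{j=i+1}^m(q-a_j)$ correctly includes those factors $q$). So the key steps are: (a) identify divisibility with a box of exponents on $x_{i+1},\dots,x_m$; (b) use Lemma \ref{lem:form_monomials} to see the box is exactly $\prod\{a_j,\dots,q-1\}$; (c) use regularity of $\tilde d$ (via Lemma \ref{lem:Mondi_d_reg}, $\size{\Moni{\tilde d}{i}}=q^{m-i}$) together with $w_i=1$ to show every choice in the box extends uniquely to a monomial in $\Moni{\tilde d}{i}$ by solving for $b_i$; (d) check $b_i\ge 1$, which is where $d\le w_1q$ and $\tilde d\gg d$ enter.

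The main obstacle will be step (c)–(d): rigorously arguing that the weight-one variable $x_i$ can "absorb" any residual degree to land exactly at $\tilde d$ while staying in $\Mon_{\tilde d}$ (i.e., not being divisible by any $\ini(f)$), and that no monomial in $\Moni{\tilde d}{i}$ is lost or double-counted. I expect this to follow from the regularity of $\tilde d$: since $\size{\Moni{\tilde d}{i}}=q^{m-i}$, the projection $x_i^{b_i}\cdots x_m^{b_m}\mapsto(b_{i+1},\dots,b_m)$ must be a bijection from $\Moni{\tilde d}{i}$ onto $\{0,\dots,q-1\}^{m-i}$, and then the divisible sub-box is cut out exactly by the conditions $b_j\ge a_j$. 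Verifying that $\tilde d\gg d$ (concretely, $\tilde d$ large enough relative to $w_mq$) makes the corresponding $b_i$ nonnegative — indeed at least $1$ — is the one genuinely quantitative point, and I would state it as the place where the "$\tilde d\gg d$" hypothesis is consumed.
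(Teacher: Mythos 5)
Your plan matches the paper's proof essentially step for step: parts (i) and (ii) from the definitions and the partition $\Mon_{\tilde d}=\bigsqcup_i\Moni{\tilde d}{i}$, and part (iii) by using Lemmas \ref{lem:form_monomials} and \ref{lem:Mondi_d_reg} to identify $\Moni{\tilde d}{i}$ with the box $\set{0,\dots,q-1}^{m-i}$ (with $b_i$ forced by the degree since $w_i=1$), so that divisibility cuts out exactly the sub-box $\prod_{j>i}\set{a_j,\dots,q-1}$, with $\tilde d\gg d$ consumed in checking $b_i\geq a_i$. The points you flag as needing verification are handled in the paper exactly as you anticipate, so the proposal is correct.
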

\begin{proof}
The first two properties directly follow from the definition of $\FBi{i}(x_{j_0}^{a_{j_0}}\cdots x_m^{a_m})$.
Let us prove \ref{it:FBe}. 
First notice that the hypothesis $d \leq w_1 q$ ensures that $0 \leq a_1,\dots,a_m \leq q-1$, which means $q-a_j \geq 1$ for every $j \geq 1$.

Now take $i \in \set{0,\dots,m-1}$ such that $w_0=\dots=w_i=1$. Since $\tilde{d} \in\reg(\mathbb{P}(w)(\fq))$, Lemmas \ref{lem:form_monomials} and \ref{lem:Mondi_d_reg} imply that
\[\Moni{\tilde{d}}{i}=\set{x_i^{b_i} \dots x_m^{b_m} : 0 \leq b_j \leq q-1 \text{ for }j \geq i+1}. \]
Moreover, by definition of the ordering and the fact that $w_i=1$, we have that $b_i=\tilde{d}-\sum_{j=i+1}^m w_ja_j$. For $\tilde{d}$ large enough, it is always true that $a_i \leq b_i$. As a result, the monomial $x_{j_0}^{a_{j_0}}\cdots x_m^{a_m}$ divides a monomial $x_i^{b_i} \dots x_m^{b_m}\in \Moni{\tilde{d}}{i}$ if and only if $0\leq a_j \leq b_j \leq q-1$ for every $j \geq i+1$, hence the formula for $\FBi{i}(x_{j_0}^{a_{j_0}}\cdots x_m^{a_m})$.
\end{proof}

\section{Proof of the main theorem}\label{s:proof}

This section is dedicated to the proof of the Theorem~\ref{t:theorem_conjecture_intro}. We start by computing the footprint bound associated to monomials involving at least one variable among $x_0, x_{\ell+1},\dots,x_m$ (Lemmas~\ref{lem:FB_x0} and \ref{lem:ws>1}). We deal with monomials that depend only on the $\ell$ variables $x_1,\dots,x_\ell$ separately (Lemma~\ref{lem:var_weights=w_1}), as their corresponding footprint bound may be smaller than the value expected by Theorem~\ref{t:theorem_conjecture_intro} (see Example~\ref{ex:bad_fb}).

\begin{lem}\label{lem:FB_x0}
Assume $w_0=1$. Fix $1 \leq d \leq w_1q$.
    Let $x_0^{a_0}\cdots x_m^{a_m} \in \Moni{d}{0}$ (i.e., $a_0\geq 1$). 
    \begin{itemize}
        \item If $w_1 \mid d$, then 
        \[\FB(x_0^{a_0}\cdots x_m^{a_m}) \geq \FB\left(x_0^{w_1}x_1^{d/w_1-1}\right)=q^{m-1}\left(q-\frac{d}{w_1}+1\right).\]
         \item If $w_1 \nmid d$, then 
         \[\FB(x_0^{a_0}\cdots x_m^{a_m}) \geq \FB\left(x_0^{a^\star_0}x_1^{\Floorfrac{d-1}{w_1}}\right)=q^{m-1}\left(q-\Floorfrac{d-1}{w_1}\right),\]
         where $a^\star_0$ is the positive integer such that $d=a^\star_0+\Floorfrac{d-1}{w_1}w_1$ and $1 \leq a^\star_0 \leq w_1-1$.
    \end{itemize}
\end{lem}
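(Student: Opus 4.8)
The plan is to use Lemma~\ref{lem:FBe} to turn the footprint bound of any monomial of $\Moni{d}{0}$ into a product, and then to minimise that product over $\Moni{d}{0}$. If $x_0^{a_0}\cdots x_m^{a_m}\in\Moni{d}{0}$, then $j_0=0$, so Lemma~\ref{lem:FBe}\ref{it:sum_FB} gives $\FB(x_0^{a_0}\cdots x_m^{a_m})=\FBi{0}(x_0^{a_0}\cdots x_m^{a_m})$; and since $w_0=1$ and $d\le w_1q$, Lemma~\ref{lem:FBe}\ref{it:FBe} with $i=0$ yields
\[
\FB(x_0^{a_0}\cdots x_m^{a_m})=\prod_{j=1}^m(q-a_j).
\]
I would also unify the two cases of the statement: setting $k=\Floorfrac{d-1}{w_1}$, one checks that $k=d/w_1-1$ when $w_1\mid d$, so in both cases the claimed lower bound equals $q^{m-1}(q-k)$, and both distinguished monomials have the shape $x_0^{a}x_1^{k}$ with $a=d-w_1k\ge 1$ (namely $a=w_1$ if $w_1\mid d$, and $a=a_0^\star$ otherwise).

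Next I would compute $\FB(x_0^{a}x_1^{k})$ from the definition. A monomial $x_0^{b_0}\cdots x_m^{b_m}\in\Mon_{\tilde d}$ divisible by $x_0^{a}x_1^{k}$ satisfies $b_0\ge a\ge 1$, hence lies in $\Moni{\tilde d}{0}$; as in the proof of Lemma~\ref{lem:FBe}\ref{it:FBe}, since $\tilde d\in\reg(\PP(w)(\fq))$ with $\tilde d\gg d$, the set $\Moni{\tilde d}{0}$ consists exactly of the monomials with $0\le b_j\le q-1$ for all $j\ge 1$, the condition $b_0\ge a$ being automatic. Counting the tuples $(b_1,\dots,b_m)$ with $k\le b_1\le q-1$ and $0\le b_j\le q-1$ for $j\ge 2$ gives $\FB(x_0^{a}x_1^{k})=(q-k)q^{m-1}$, which is the value announced in each case. (This also shows $x_0^{a}x_1^{k}\in\Moni{d}{0}$, as it divides a standard monomial, so one could instead invoke Lemma~\ref{lem:FBe}\ref{it:FBe} directly here.)

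The heart of the argument is then the inequality $\prod_{j=1}^m(q-a_j)\ge q^{m-1}(q-k)$ for every $x_0^{a_0}\cdots x_m^{a_m}\in\Moni{d}{0}$. Since $a_0\ge 1$ and $w_j\ge w_1$ for all $j\ge 1$, we get $w_1\sum_{j=1}^m a_j\le\sum_{j=1}^m w_ja_j=d-a_0\le d-1$, hence $s:=\sum_{j=1}^m a_j\le\Floorfrac{d-1}{w_1}=k$; moreover $k\le q-1$ because $d\le w_1q$, so every factor $q-a_j$ is at least $q-s\ge 1$. The key elementary point is that, for integers $a_i,a_j\ge 0$,
\[
(q-a_i)(q-a_j)=q(q-a_i-a_j)+a_ia_j\ge q(q-a_i-a_j),
\]
so replacing a pair $(a_i,a_j)$ by $(a_i+a_j,0)$ cannot increase $\prod_{j=1}^m(q-a_j)$ and keeps all exponents in $\set{0,\dots,q-1}$ (as $a_i+a_j\le s\le q-1$). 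Iterating until at most one exponent is nonzero gives $\prod_{j=1}^m(q-a_j)\ge(q-s)q^{m-1}\ge(q-k)q^{m-1}$, with equality when the monomial is $x_0^{a}x_1^{k}$; together with the previous steps this proves the lemma.

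As for difficulty: each step is essentially routine. The only mildly delicate points are the bookkeeping that merges the cases $w_1\mid d$ and $w_1\nmid d$ — recognising that both reduce to the single quantity $q^{m-1}(q-k)$ with $k=\Floorfrac{d-1}{w_1}$ — and the verification that $x_0^{a}x_1^{k}$ lies in $\Moni{d}{0}$ (or, equivalently, the direct count of $\FB(x_0^{a}x_1^{k})$). The genuine content is the one-line rearrangement inequality above, which expresses that $\prod_j(q-a_j)$ is minimised by concentrating the whole exponent budget on a single weight-$w_1$ variable.
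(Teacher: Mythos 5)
Your proof is correct, and its core minimization step takes a genuinely more direct route than the paper's. The paper treats $\min\prod_{j=1}^m(q-a_j)$ as an optimization over the domain $\mathcal{D}=\set{(a_1,\dots,a_m)\in\set{0,\dots,q-1}^m : \sum_j w_ja_j\le d-1}$ and argues about where the minimum is attained: it first shows the weighted constraint must be nearly tight at a minimizer, then eliminates each variable of weight $w_j>w_1$ via a Euclidean-division substitution $a_jw_j=\alpha_1w_1+\alpha_0$ (checking $a_j\le\alpha_1$ so that $F_0$ does not increase), and finally merges the remaining equal-weight exponents into $a_1$. You instead collapse all of this into the single observation that $w_1\sum_{j\ge1}a_j\le\sum_{j\ge1}w_ja_j\le d-1$, so the unweighted exponent sum $s$ is at most $k=\Floorfrac{d-1}{w_1}$, after which the uniform merging inequality $(q-a_i)(q-a_j)\ge q(q-a_i-a_j)$ yields $\prod_j(q-a_j)\ge q^{m-1}(q-s)\ge q^{m-1}(q-k)$ with no case distinction between heavy and light variables and no appeal to a minimizer. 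Both arguments hinge on the same elementary identity, but yours avoids the exchange bookkeeping entirely; the paper's version has the mild advantage of identifying the extremal monomial as the endpoint of its exchange process, whereas you must (and do) separately verify that $x_0^{a}x_1^{k}$ is a standard monomial and compute its footprint, which you handle correctly via the description of $\Moni{\tilde d}{0}$ at a regularity degree. All the side conditions you need ($k\le q-1$, all factors $q-a_j\ge q-s\ge1$ so the merges preserve the inequality, $b_0\ge a$ automatic for $\tilde d\gg d$) are checked, so I see no gap.
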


\begin{rem}\label{rem:same_formulae}
    Note that when $w_1 \mid d$, the second item of Lemma~\ref{lem:FB_x0} matches the first one when allowing $a^\star_0=w_1$, since we have $\Floorfrac{d-1}{w_1}=\frac{d}{w_1}-1$.
\end{rem}

\begin{proof}[Proof of Lemma~\ref{lem:FB_x0}]
We prove the second formula, which is more general (see Remark \ref{rem:same_formulae}). To do that, we generalize the ideas of the proof of \cite[Lemma~1]{geilSecondWeightGRM} (see also \cite[Example~5.19]{tohaneanuCommutativeAlgebraforCodingTheory}).
    
    The hypotheses $d \leq w_1 q$ and $a_0 \geq 1$ ensure that $0 \leq a_1,\dots,a_m \leq q-1$. By Lemma \ref{lem:FBe} \ref{it:sum_FB} and \ref{it:FBe}, we have 
    \[\FB(x_0^{a_0}\cdots x_m^{a_m})= \FBi{0}(x_0^{a_0}\cdots x_m^{a_m})=\prod_{j=1}^m(q-a_j).\]
    As $w_0=1$, the exponent $a_0$ is completely determined by the other $m$ ones: $a_0=d-(w_1a_1 + \dots +w_ma_m)$. Since we require $a_0 \geq 1$, we aim to minimize the $m$-variate function $F_0$ defined by
    \[F_0(a_1,\dots,a_m):=\prod_{j=1}^m(q-a_j)\]
    over the domain 
    \[\mathcal{D}=\set{ (a_1,\dots,a_m) \in \set{0,\dots,q-1}^m : w_1a_1+\dots+w_ma_m \leq d-1}.\]

    Suppose $(a_1,\dots,a_m)\in \mathcal{D}$ is a tuple for which the minimum is attained. Then for every $j\in \set{1,\dots,m}$, the tuple $(a_1,\dots,a_j+1,\dots,a_m)$ does not belong to $\mathcal{D}$ because
        \[F_0(a_1,\dots,a_j+1,\dots,a_m) < F_0(a_1,\dots,a_j,\dots,a_m).\]
Therefore $ w_1a_1+\dots+w_ma_m \geq d-\min\set{w_j : j \geq 1}=d-w_1$, which means that $a_0=d-(w_1a_1+\dots+w_ma_m)\leq w_1$.

Let us prove that we can assume without loss of generality that $a_j=0$ for every $j \geq \ell +1$ (i.e., $w_j > w_1$). Assume by contradiction that there exists an index $j$ such that $a_j \geq 1$ and $w_j > w_1$. Then $a_jw_j \geq w_1+1$. Write the Euclidean division of $a_jw_j-(w_1+1)$ by $w_1$ as 
\[a_jw_j-(w_1+1)=\beta_1w_1+\beta_0,\] 
 with $\beta_1 \geq 0$ and $0\leq \beta_0 \leq w_1-1$.
Set $\alpha_1=\beta_1+1 \geq 1$ and $\alpha_0=\beta_0+1\leq w_1$. Then we can write $a_jw_j=\alpha_1w_1+\alpha_0$. In particular, when $w_1=1$, we have $\alpha_0=1$ and $\alpha_1=a_jw_j-1$.

Then $(a_1+\alpha_1,a_2,\dots,a_{j-1},0,a_{j+1},\dots,a_m) \in \mathcal{D}$. Moreover we have
\[a_j \leq \frac{w_1(\alpha_1+1)}{w_j}< \frac{w_j(\alpha_1+1)}{w_j}=\alpha_1+1,\]
hence $a_j \leq \alpha_1$. Then 
\begin{align*}
 F_0(a_1,\dots,a_j,\dots,a_m)-F_0(a_1+\alpha_1,&a_2,\dots,a_{j-1},0,a_{j+1},\dots,a_m)\\
 &= \left( (q-a_1)(q-a_j) - (q-a_1-\alpha_1)q\right) \prod_{i \neq 1,j} (q-a_i)\\
 &=\left(a_1a_j + q(\alpha_1-a_j)\right)\prod_{i \neq 1,j} (q-a_i)\geq 0,
\end{align*}

which implies that $F_0(a_1,\dots,a_j,\dots,a_m) \geq F_0(a_1+\alpha_1,a_2,\dots,a_{j-1},0,a_{j+1},\dots,a_m)$.

Recall that $w_1=\dots=w_\ell$. We can thus now assume that $(a_1,\dots,a_m)=(a_1,\dots,a_\ell,0,\dots,0)$.
\begin{itemize}
    \item If $\ell=1$ (i.e., $w_2>w_1$), then $(a_1,0,\dots,0)$ gives the minimum of the function $F_0$ if and only if $a_1$ is maximal with $w_1a_1 \leq d-1$, that is, $a_1=\Floorfrac{d-1}{w_1}$.
    \item Assume $\ell \geq 2$. For every $j \in \set{2,\dots,\ell}$ the tuple $(a_1+a_j,a_2,\dots,0,\dots,a_\ell,0,\dots,0)$ belongs to $\mathcal{D}$ and
    \begin{align*}
 F_0(a_1,a_2,\dots)-F_0(a_1+a_j,&a_2,\dots,0,\dots,a_\ell,0,\dots,0)  \\
 &= \left( (q-a_1)(q-a_j) - (q-a_1-a_j)q)\right) \prod_{i\neq 1,j} (q-a_i)\\
 &=a_1a_j\prod_{i\neq 1,j} (q-a_i) \geq 0
\end{align*}
We can then assume that $a_2=\dots=a_\ell=0$. As in the previous case, $a_1$ has to be maximal, i.e., $a_1=\Floorfrac{d-1}{w_1}$.
\end{itemize}
In both cases, we have
  \begin{align*}
      \min\set{\FB(x_0^{a_0}\cdots x_m^{a_m}) : x_0^{a_0}\cdots x_m^{a_m} \in \Moni{d}{0}} &= F_0\left(\Floorfrac{d-1}{w_1},0,\dots,0\right)\\
      &=\FB\left(x_0^{a^\star_0}x_1^{\Floorfrac{d-1}{w_1}}\right)\\
      &=q^{m-1}\left(q-\Floorfrac{d-1}{w_1}\right),
  \end{align*}
  where $a^\star_0=d-\Floorfrac{d-1}{w_1}w_1 \geq 1$.
\end{proof}

\begin{lem}\label{lem:ws>1}
Let $x_{1}^{a_1}\cdots x_s^{a_s}\in \Mon \setminus \Moni{}{0}$ (i.e., $a_0=0$) with $a_{s}>0$ and $w_{s}>w_1$. Then
\begin{equation}\label{eq:fbwj=1}
\FB(x_{1}^{a_1}\cdots x_s^{a_s})\geq \FB(x_0^{\alpha_0}x_1^{a_1+\alpha_1} \cdots x_{s-1}^{a_{s-1}})
\end{equation}
where $\alpha_0$ and $\alpha_1$ are the unique integers such that $a_sw_s=\alpha_0+\alpha_1w_1$ and $1 \leq \alpha_0 \leq w_1$, $\alpha_1 \geq 1$. 
\end{lem}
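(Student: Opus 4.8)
The plan is to reduce the inequality to a comparison of the single \emph{first-coordinate} footprint numbers $\FBi{0}$. Write $M := x_1^{a_1}\cdots x_s^{a_s}$ and $M' := x_0^{\alpha_0}x_1^{a_1+\alpha_1}x_2^{a_2}\cdots x_{s-1}^{a_{s-1}}$; a one-line check using $a_sw_s=\alpha_0+\alpha_1w_1$ shows $\deg M'=\deg M$, so both footprint bounds are taken at the same degree $d$. Since $M'$ has positive $x_0$-exponent $\alpha_0\geq 1$, it divides no element of $\Moni{\tilde d}{i}$ for $i\geq 1$ (those monomials have $x_0$-exponent $0$), whence $\FBi{i}(M')=0$ for all $i\geq 1$ and so $\FB(M')=\FBi{0}(M')$. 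On the other hand $\FB(M)=\sum_{i=0}^m\FBi{i}(M)\geq\FBi{0}(M)$ trivially. Thus it suffices to prove $\FBi{0}(M)\geq\FBi{0}(M')$.

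Next I would make the two numbers $\FBi{0}$ explicit. For $\tilde d\in\reg(\PP(w)(\fq))$ with $\tilde d\gg 0$, Lemmas~\ref{lem:form_monomials} and \ref{lem:Mondi_d_reg} identify $\Moni{\tilde d}{0}$ with $\{x_0^{b_0}\cdots x_m^{b_m}:0\leq b_j\leq q-1\text{ for }j\geq 1\}$ (see the proof of Lemma~\ref{lem:FBe}\ref{it:FBe}). Consequently, for any monomial $N=x_0^{c_0}\cdots x_m^{c_m}$ of bounded degree one has $\FBi{0}(N)=\prod_{j=1}^m(q-c_j)_+$, where $(x)_+:=\max(0,x)$. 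Substituting $M$ (whose exponents vanish at indices $>s$) and $M'$ (whose exponents vanish at indices $\geq s$) gives
\[\FBi{0}(M)=(q-a_1)_+(q-a_s)_+\, q^{m-s}\prod_{j=2}^{s-1}(q-a_j)_+,\qquad \FBi{0}(M')=(q-a_1-\alpha_1)_+\, q^{m-s+1}\prod_{j=2}^{s-1}(q-a_j)_+.\]

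The crux is the elementary estimate $\alpha_1\geq a_s$: otherwise $\alpha_1\leq a_s-1$ and $a_sw_s=\alpha_0+\alpha_1w_1\leq w_1+(a_s-1)w_1=a_sw_1$, forcing $w_s\leq w_1$, contrary to hypothesis. With this in hand: if $\FBi{0}(M')=0$ there is nothing to prove; otherwise $q-a_1-\alpha_1>0$ and $q-a_j>0$ for $2\leq j\leq s-1$, hence also $q-a_1>0$ (as $a_1\leq a_1+\alpha_1<q$) and $q-a_s>0$ (as $a_s\leq\alpha_1<q$), so every truncation may be dropped and
\[\FBi{0}(M)-\FBi{0}(M')=q^{m-s}\Big(\prod_{j=2}^{s-1}(q-a_j)\Big)\big[(q-a_1)(q-a_s)-q(q-a_1-\alpha_1)\big]=q^{m-s}\Big(\prod_{j=2}^{s-1}(q-a_j)\Big)\big[q(\alpha_1-a_s)+a_1a_s\big]\geq 0.\]
Chaining the three steps yields $\FB(M)\geq\FBi{0}(M)\geq\FBi{0}(M')=\FB(M')$, which is the claim. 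The only delicate point — and it is minor — is the bookkeeping with the truncations $(\cdot)_+$ once an exponent reaches $q$ (if one were allowed to assume a degree bound such as $d\leq w_1q$, as in Lemma~\ref{lem:FB_x0}, this would disappear entirely); the inequality $\alpha_1\geq a_s$ conveniently makes every such degenerate configuration collapse to $\FBi{0}(M')=0$.
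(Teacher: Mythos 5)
Your proof is correct and follows essentially the same route as the paper's: reduce both sides to $\FBi{0}$, use the product formula $\prod_{j\geq 1}(q-c_j)$ for $\FBi{0}$, and conclude from $(q-a_1)(q-a_s)-q(q-a_1-\alpha_1)=q(\alpha_1-a_s)+a_1a_s\geq 0$ via $\alpha_1\geq a_s$ --- which is exactly the computation the paper invokes by pointing back to the second part of the proof of Lemma~\ref{lem:FB_x0}. Your explicit handling of the truncations $(\cdot)_+$ when an exponent reaches $q$ is a minor extra precaution the paper leaves implicit (it is vacuous under the degree bound $d\leq w_1q$ in force when the lemma is applied), and your derivation of $\alpha_1\geq a_s$ by contradiction is an equally valid variant of the paper's direct estimate $a_s\leq w_1(\alpha_1+1)/w_s<\alpha_1+1$.
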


\begin{proof}
We suppose that $a_s \geq 1$ and $w_s>w_1$. As in the second part of the proof of Lemma \ref{lem:FB_x0}, we can prove that
\[
\begin{aligned}
\FB(x_1^{a_1}\cdots x_s^{a_s}) \geq \FBi{0}(x_1^{a_1}\cdots x_s^{a_s}) =(q-a_1)\prod_{i=2}^m(q-a_i)&\geq (q-a_1-\alpha_1)\prod_{i=2}^m(q-a_i) \\
&= \FB(x_0^{\alpha_0}x_1^{a_1+\alpha_1} \cdots x_{s-1}^{a_{s-1}}).
\end{aligned}
\]
\end{proof}

With the previous lemmas, we can find candidates for the monomials with lowest footprint bound. However, there are some cases in which the bound for the number of zeroes given by the footprint bound is lower than Conjecture \ref{sudhir conjecture}. This happens, for example, when $w_1=w_2\mid d$, since we may have $\FB\left(x_2^{d/w_2}\right)<\FB\left(x_1^{d/w_1}\right)=\FB\left(x_0x_1^{d/w_1-1}\right)=q^{m-1}(q-\frac{d}{w_1}+1)$.
\begin{ex}\label{ex:bad_fb}
    If $w_1=w_2=1$ and $d>1$, Lemma~\ref{lem:FBe}~\ref{it:FBe} gives
    \[\FB\left(x_2^{d/w_2}\right)=q^{m-2}\left((q-d)(q+1)+1\right)<q^{m-1}\left(q-d+1\right)=\FB\left(x_1^{d/w_1}\right).\]
\end{ex}
In the next result, we use different techniques to bound the number of zeroes for these cases.

\begin{lem}\label{lem:var_weights=w_1}
Assume that $1 =w_0 \leq w_1=\dots=w_\ell$ and $w_1 \mid d$. Let $f \in \Span\Mon_d$ such that its initial monomial $\ini(f)=x_1^{a_1}\cdots x_\ell^{a_\ell}$ is only divisible by $x_1$, \dots, $x_\ell$ (i.e., $a_0=a_{\ell+1}=\dots=a_m=0$). Then
\[
\abs{V_{\mathbb{P}(w)}(f)(\fq)}\leq \frac{d}{w_1}q^{m-1}+p_{m-2}.
\]
\end{lem}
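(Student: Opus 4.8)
The key idea is that $f$ only involves the variables $x_1,\dots,x_\ell$, all of which have weight $w_1$, so the locus $V_{\PP(w)}(f)$ splits naturally according to whether or not all of $x_1,\dots,x_\ell$ vanish. The plan is to use Delorme's reduction (Lemma~\ref{l:delorme}) to relate the relevant part of $\PP(w)$ to an honest projective space $\PP^\ell$ on which Serre's classical bound applies, and then to count the points with $x_1=\dots=x_\ell=0$ separately. Concretely, I would decompose
\[
V_{\PP(w)}(f)(\fq) = \bigl(V_{\PP(w)}(f)(\fq) \cap U\bigr) \sqcup \bigl(V_{\PP(w)}(f)(\fq) \cap Z\bigr),
\]
where $U = \{(x_0:\dots:x_m) \in \PP(w)(\fq) : (x_1,\dots,x_\ell) \neq (0,\dots,0)\}$ and $Z = V_{\PP(w)}(x_1,\dots,x_\ell)(\fq)$. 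Since $f$ is homogeneous of degree $d$ in $x_1,\dots,x_\ell$ alone (of degree $d/w_1$ when regarded as a polynomial in those variables, as $w_1 \mid d$), every point of $Z$ lies on $V_{\PP(w)}(f)$; and $Z \cong \PP(1,w_{\ell+1},\dots,w_m)(\fq)$ has $p_{m-\ell}$ points.

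\textbf{Bounding the points on $U$.} For the piece $V_{\PP(w)}(f)(\fq)\cap U$, I would fix the coordinates $x_{\ell+1},\dots,x_m$ and view each fiber as a family of points of the weighted projective space on coordinates $x_0,\dots,x_\ell$; but more efficiently, since $w_0=w_1=\dots=w_\ell=1$ and $f$ depends only on $x_1,\dots,x_\ell$, I would project to the subspace on coordinates $x_1,\dots,x_\ell$. Regarding $\bar f = f$ as a homogeneous polynomial of degree $d/w_1$ on $\PP^{\ell-1}$ with coordinates $x_1,\dots,x_\ell$, Serre's bound gives $|V_{\PP^{\ell-1}}(\bar f)(\fq)| \leq \frac{d}{w_1}q^{\ell-2} + p_{\ell-3}$ (for $d/w_1 \leq q$; otherwise $\bar f$ could vanish on all of $\PP^{\ell-1}$, which must be handled separately and caps at $p_{\ell-1}$). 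Then each such point of $\PP^{\ell-1}$ corresponds, over all choices of $x_0$ and $x_{\ell+1},\dots,x_m$, to a fiber whose $\fq$-point count I would compute: fixing a representative with some $x_i=1$ ($1\le i\le\ell$), the remaining freedom is $x_0 \in \fq$ (contributing a factor $q$) and $(x_{\ell+1},\dots,x_m)$ ranging over affine space $\A^{m-\ell}$, but one must take care of the action and of which tuples give new points — this is exactly what Delorme's reduction is designed to control, identifying $U$ with (a bundle over) $\PP^\ell$ minus a coordinate subspace. I expect the clean statement to be that the points of $U$ fiber over $\PP^\ell(\fq) \setminus \{x_1=\dots=x_\ell=0\}$ with each fiber of size $q^{m-\ell}$, so that $|V_{\PP(w)}(f)(\fq) \cap U| = q^{m-\ell} \cdot |V_{\PP^\ell}(\hat f)(\fq) \cap U_0|$ where $\hat f$ is $f$ viewed on $\PP^\ell$ and $U_0$ is the corresponding open subset.

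\textbf{Assembling the bound.} Putting the two pieces together, and using Serre's bound on $\PP^\ell$ in the form $|V_{\PP^\ell}(\hat f)(\fq)| \leq \frac{d}{w_1}q^{\ell-1} + p_{\ell-2}$, I would get
\[
\abs{V_{\PP(w)}(f)(\fq)} \leq q^{m-\ell}\left(\frac{d}{w_1}q^{\ell-1}+p_{\ell-2} - 1\right) + p_{m-\ell} = \frac{d}{w_1}q^{m-1} + q^{m-\ell}(p_{\ell-2}-1) + p_{m-\ell},
\]
and then verify the elementary identity $q^{m-\ell}(p_{\ell-2}-1) + p_{m-\ell} = p_{m-2}$ (both sides equal $\frac{q^{m-1}-q^{m-\ell}}{q-1}+\frac{q^{m-\ell+1}-1}{q-1} - q^{m-\ell}$... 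I would just expand $p_k = \frac{q^{k+1}-1}{q-1}$ on each side and check equality), which yields exactly the claimed bound $\frac{d}{w_1}q^{m-1}+p_{m-2}$.

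\textbf{Main obstacle.} The routine calculations (Serre's bound, the $p$-identity) are easy; the delicate point is the precise bookkeeping of the fibration $U \to \PP^\ell \setminus \{x_1=\dots=x_\ell=0\}$ via Delorme's reduction — making sure every $\fq$-point of $\PP(w)$ with not-all-$x_i$-zero is counted exactly once and that the fiber size is uniformly $q^{m-\ell}$, including the interaction with the $\overline{\F}^*$-action on the weighted coordinates. I would also need to separately dispatch the degenerate case where $\hat f$ (or $\bar f$) vanishes identically on the relevant projective space, where one falls back on the trivial bound $p_m$ and checks it is consistent. The rest follows by combining the pieces.
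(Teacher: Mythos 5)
Your overall route (split off a coordinate subspace, use Delorme's reduction to land in an honest projective space, apply Serre's bound there, and finish with an identity among the $p_k$'s) is essentially the paper's, and your arithmetic identity $q^{m-\ell}(p_{\ell-2}-1)+p_{m-\ell}=p_{m-2}$ is correct. But there is a genuine gap, and it sits exactly at the point you flagged as delicate. It originates in your opening claim that ``$f$ only involves the variables $x_1,\dots,x_\ell$.'' The hypothesis constrains only $\ini(f)$: from the ordering one deduces that $f$ involves none of $x_{\ell+1},\dots,x_m$, but monomials divisible by $x_0$ may perfectly well occur in $f$, since they are smaller than $\ini(f)=x_1^{a_1}\cdots x_\ell^{a_\ell}$ in the chosen order. (Because $w_0=1$, $w_1\mid d$ and the remaining variables all have weight $w_1$, the exponent of $x_0$ in any such monomial is a multiple of $w_1$, which is what makes the Delorme substitution $x_0^{w_1}\mapsto y_0$ legitimate.) In particular $f$ may contain $x_0^{d}$ with nonzero coefficient.

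This breaks your bookkeeping in two places. First, your set $Z=\{x_1=\dots=x_\ell=0\}$, of size $p_{m-\ell}$, need not be contained in $V_{\PP(w)}(f)$; that alone is harmless for an upper bound. Second, and more seriously, the ``$-1$'' in your assembled bound $q^{m-\ell}\bigl(\frac{d}{w_1}q^{\ell-1}+p_{\ell-2}-1\bigr)+p_{m-\ell}$ amounts to asserting that $(1:0:\dots:0)\in\PP^\ell$ lies on $V_{\PP^\ell}(\hat f)$, i.e., that the coefficient of $x_0^{d}$ in $f$ vanishes, which is not justified. Without that $-1$ your bound becomes $\frac{d}{w_1}q^{m-1}+p_{m-2}+q^{m-\ell}$, which is too large. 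The repair is either a two-case analysis on whether $\hat f(1,0,\dots,0)=0$ (if it does not vanish there, then $V_{\PP(w)}(f)\cap Z$ contains only the $p_{m-\ell-1}$ points with $x_0=0$ as well, which restores the balance), or, as the paper does, to split off the smaller subspace $\{x_0=x_1=\dots=x_\ell=0\}$ of size $p_{m-\ell-1}$ and let its complement map onto all of $\PP^\ell$ with fibers of size at most $q^{m-\ell}$ (equality can fail when $w_1\geq 2$, so only the inequality should be claimed); then no point of $V_{\PP^\ell}(\hat f)$ needs to be discarded and Serre's bound is used as is. With either repair the rest of your argument goes through.
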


\begin{proof}
First note that a monomial of the form $x_1^{a_1}\cdots x_\ell^{a_\ell}$ lies in $\M_d$ if and only if $w_1$ divides $d$. By the definition of the ordering, $f$ has no monomial divisible by some variable $x_j$ for $j \geq \ell+1$, and $f$ depends only on the first $\ell+1$ variables. Moreover, the exponent of $x_0$ in every monomial of $f$ is divisible by $w_1$.

We now prove that
\begin{equation}\label{eq:tildef}
    \abs{V_{\mathbb{P}(w)}(f)(\fq)}\leq\abs{V_{\mathbb{P}^\ell}\left(\tilde{f}\right)(\fq)}q^{m-\ell}+p_{m-\ell-1},
\end{equation}
where $\tilde{f} \in \fq[y_0,\dots,y_\ell]_{d/w_1}^{(1,\dots,1)}$ is the homogeneous polynomial of degree $d/w_1$ that satisfies $\tilde{f}(x_0^{w_1},x_1,\dots,x_\ell)=f(x_0,\dots,x_\ell,b_{\ell+1},\dots,b_m)$ for every $(b_{\ell+1},\dots,b_m) \in \fq^{m-\ell}$. 
Let $P=(P_0:\dots:P_m) \in V_{\mathbb{P}(w)}(f)(\fq)$.
\begin{itemize}
    \item If $(P_0,\dots,P_\ell)\neq(0,\dots,0)$, then Lemma \ref{l:delorme} implies that $(P_0^{w_1}:P_1:\dots:P_m) \in \PP^{\ell}(\fq)$. Moreover, it is a zero of $\tilde{f}$, i.e., $(P_0^{w_1}:P_1:\dots:P_m) \in V_{\mathbb{P}^\ell}\!\left(\tilde{f}\right)(\fq)$. 
    
    Conversely, Lemma \ref{l:delorme} ensures that any $\fq$-point of $\mathbb{P}^\ell$ can be written in the form $(P_0^{w_1}:P_1:\dots:P_m)$ with  $(P_0:P_1:\dots:P_m)\in\PP(1,w_1,\dots,w_1)$. Then for any $(P_0^{w_1}:\dots:P_m) \in V_{\mathbb{P}^\ell}\!\left(\tilde{f}\right)(\fq)$ and any $(Q_{\ell+1},\dots,Q_m) \in \fq^{m-\ell}$, the point $(P_0:\dots:P_\ell:Q_{\ell+1}:\dots:Q_m)$ belongs to $V_{\mathbb{P}(w)}(f)(\fq)$. However, it is worth noting that two different $(Q_{\ell+1},\dots,Q_m)\neq(Q'_{\ell+1},\dots,Q'_m)$ in $\fq^{m-\ell}$ may define the same point 
    $(P_0:\dots:P_\ell:Q_{\ell+1}:\dots:Q_m)=(P_0:\dots:P_\ell:Q'_{\ell+1}:\dots:Q'_m)$ in $\PP(w)(\fq)$ when $w_1 \geq 2$.
     Therefore, each element of $V_{\mathbb{P}^\ell}\!\left(\tilde{f}\right)(\fq)$ gives rise to \emph{at most} $q^{m-\ell}$ elements of the form $(P_0:\dots:P_\ell:Q_{\ell+1}:\dots:Q_m)$ with $(Q_{\ell+1},\dots,Q_m) \in \fq^{m-\ell}$. 
    
    \item Otherwise, we have $(P_0,\dots,P_\ell)=(0,\dots,0)$ and for every $(Q_{\ell+1},\dots,Q_m) \in \PP(w_{\ell+1},\dots,w_m)(\fq)$, the point $(0:\cdots :0:Q_{\ell+1}:\cdots:Q_m)$ lies in $V_{\mathbb{P}(w)}(f)(\fq)$.
\end{itemize}
This leads to
\begin{align*}
     \abs{V_{\mathbb{P}(w)}(f)(\fq)}&= \abs{V_{\mathbb{P}(w)}(f)(\fq) \setminus \bigcap_{i=0}^\ell V_{\mathbb{P}(w)}(x_i)(\fq)} + \abs{V_{\mathbb{P}(w)}(f)(\fq) \cap \bigcap_{i=0}^\ell V_{\mathbb{P}(w)}(x_i)(\fq)}\\
&\leq\abs{V_{\mathbb{P}^\ell}\!\left(\tilde{f}\right)(\fq)}q^{m-\ell}+\abs{\PP(w_{\ell+1},\dots,w_m)(\fq)},
\end{align*}
which proves \eqref{eq:tildef}. Serre's bound \cite{serreZeroesProjectiveSpace} on the polynomial $\tilde{f}$ over $\PP^\ell(\fq)$ gives that
\begin{equation}\label{eq:Serre}
    \abs{V_{\mathbb{P}^\ell}\!\left(\tilde{f}\right)(\fq)} \leq \frac{d}{w_1}q^{\ell-1}+p_{\ell-2}.
\end{equation}
Injecting the last inequality in Equation \eqref{eq:tildef} gives the expected bound.
\end{proof}

We have now gathered all the ingredients to prove the main result of the paper.

\begin{thm}\label{t:theorem_conjecture}
Assume that $w_0=1$. Then 
\[
e_q(d;w_0,w_1,\dots,w_m)= \min\set{p_m,\left(\Floorfrac{d-1}{w_1}+1\right)q^{m-1}+p_{m-2}}.
\]
\end{thm}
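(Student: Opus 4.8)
The plan is to prove both inequalities $e_q(d;w)\le M$ and $e_q(d;w)\ge M$ for $M=\left(\Floorfrac{d-1}{w_1}+1\right)q^{m-1}+p_{m-2}$, after first splitting off the range where the minimum is $p_m$. Permuting the variables induces an isomorphism of $\PP(w)$ and leaves $e_q(d;w)$ unchanged, so I may assume the standing ordering; the case $w_0=\dots=w_m=1$ is Serre's bound, so I take $w_m\ge 2$. Since $p_m=q^{m-1}(q+1)+p_{m-2}$, the second term of the minimum is $\ge p_m$ exactly when $\Floorfrac{d-1}{w_1}\ge q$, i.e.\ when $d\ge w_1q+1$; in that range $\abs{V_{\PP(w)}(f)(\fq)}\le p_m$ is automatic and, because $\deg B_{0,1}=w_1q+1$ by \eqref{eq:deg_Bij} and $w_0=1$, the nonzero polynomial $x_0^{\,d-w_1q-1}B_{0,1}$ lies in $\calI(\PP(w)(\fq))_d$ and vanishes at all $p_m$ points, so $e_q(d;w)=p_m$. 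The rest of the argument then assumes $1\le d\le w_1q$, where $M<p_m$.

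Next I would record that $\calI(\PP(w)(\fq))_d=0$ — equivalently $\Mon_d=\M_d$ — for every $d\le w_1q$, so that the footprint bound applies to every nonzero homogeneous $f$ of degree $d$. This follows from Theorem~\ref{thm:binomial}: the vanishing ideal is a homogeneous binomial ideal, hence $\calI(\PP(w)(\fq))_d$ is spanned by the homogeneous binomials $x^a-x^b$ it contains, and a short inspection of the characterization recalled after Theorem~\ref{thm:binomial} shows every nonzero such binomial has degree $>w_1q$: if $a_j>b_j$ for some $j\ge 1$ then $a_j\ge q$ (as $b_j\ge 1$) and $\deg x^a\ge w_1q$, with equality forcing $x^a=x_j^q$ and hence $x^b=x^a$; otherwise every index where $a_j>b_j$ is $0$, and picking $j\ge 1$ with $a_j<b_j$ gives $b_j\ge q$ and $b_0\ge 1$, so $\deg x^b\ge 1+w_1q$.

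For the upper bound with $1\le d\le w_1q$ I would take a nonzero $f\in\fq[x_0,\dots,x_m]^w_d$ — then $f\notin\calI(\PP(w)(\fq))_d$ by the previous step — and apply the projective footprint bound to get $\abs{V_{\PP(w)}(f)(\fq)}\le p_m-\FB(\ini(f))$, then analyse $\ini(f)=x_0^{a_0}\cdots x_m^{a_m}\in\Mon_d$ in three cases. If $a_0\ge 1$, Lemma~\ref{lem:FB_x0} (with Remark~\ref{rem:same_formulae}) gives $\FB(\ini(f))\ge q^{m-1}\!\left(q-\Floorfrac{d-1}{w_1}\right)$. If $a_0=0$ but $a_s>0$ for some $s\ge\ell+1$, taking $s$ maximal with $a_s>0$ (so $w_s>w_1$), Lemma~\ref{lem:ws>1} reduces $\ini(f)$ to a monomial $x_0^{\alpha_0}x_1^{a_1+\alpha_1}\cdots x_{s-1}^{a_{s-1}}$ with $\alpha_0\ge 1$ and no larger $\FB$; this monomial lies in $\M_d=\Mon_d$, hence in $\Moni{d}{0}$, and Lemma~\ref{lem:FB_x0} again bounds its $\FB$ below by $q^{m-1}\!\left(q-\Floorfrac{d-1}{w_1}\right)$. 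In both cases $p_m-\FB(\ini(f))\le M$ after substituting $p_m=q^{m-1}(q+1)+p_{m-2}$. Finally, if $a_0=0$ and $\ini(f)$ involves only $x_1,\dots,x_\ell$, then its degree $d$ is a multiple of $w_1$, so $\Floorfrac{d-1}{w_1}+1=d/w_1$, and Lemma~\ref{lem:var_weights=w_1} gives $\abs{V_{\PP(w)}(f)(\fq)}\le \frac{d}{w_1}q^{m-1}+p_{m-2}=M$. This establishes $e_q(d;w)\le M$.

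For the matching lower bound I would exhibit an explicit extremal polynomial: write $d=a_0^\star+w_1k$ with $k=\Floorfrac{d-1}{w_1}$ and $1\le a_0^\star\le w_1$ (so $k\le q-1$ since $d\le w_1q$), choose distinct $\beta_1,\dots,\beta_k\in\fq$, and set $f=x_0^{a_0^\star}\prod_{i=1}^{k}\bigl(x_1-\beta_i x_0^{w_1}\bigr)$, homogeneous of degree $d$. The points of $\PP(w)(\fq)$ with $x_0=0$ are all zeroes of $f$ and form $\PP(w_1,\dots,w_m)(\fq)$, of size $p_{m-1}$; since $w_0=1$, every point with $x_0\ne 0$ has a unique representative $(1:y_1:\dots:y_m)$ with $y_j\in\fq$ (Lemma~\ref{l:delorme} and the Appendix), and such a point is a zero iff $y_1\in\set{\beta_1,\dots,\beta_k}$, contributing $kq^{m-1}$ points; hence $\abs{V_{\PP(w)}(f)(\fq)}=p_{m-1}+kq^{m-1}=q^{m-1}+p_{m-2}+kq^{m-1}=M$. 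I expect the substantive content to lie entirely in Lemmas~\ref{lem:FB_x0}, \ref{lem:ws>1} and \ref{lem:var_weights=w_1}; the main obstacle in assembling the theorem will be the bookkeeping of the case analysis of $\ini(f)$ — in particular recognising that the ``bad'' initial monomials of Example~\ref{ex:bad_fb} are precisely those handled by Lemma~\ref{lem:var_weights=w_1}, where the hypothesis $w_1\mid d$ comes for free — together with the verification that $\calI(\PP(w)(\fq))_d=0$ for $d\le w_1q$.
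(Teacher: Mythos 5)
Your proof is correct and follows essentially the same route as the paper: the same extremal polynomials for the lower bound (the multiple of $B_{0,1}$ for $d\ge w_1q+1$ and the product of ``linear'' factors $x_1-\beta_ix_0^{w_1}$ for $d\le w_1q$), and the same three-way case analysis on $\ini(f)$ via Lemmas~\ref{lem:FB_x0}, \ref{lem:ws>1} and \ref{lem:var_weights=w_1} for the upper bound. The one genuine addition is your explicit verification that $\calI(\PP(w)(\fq))_d=0$ for $d\le w_1q$, a fact the paper uses implicitly when it applies the footprint bound to an arbitrary nonzero $f$ (which requires $f\notin\calI(\PP(w)(\fq))_d$) but does not check; your binomial-degree argument closes that small gap cleanly.
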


\begin{proof}
We start by proving the inequality $e_q(d;w_0,w_1,\dots,w_m)\geq \min\{p_m,dq^{m-1}+p_{m-2}\}$. As in the proof of \cite[Thm. 5.1]{aubryperretWPRM}, for $d\geq w_1q+1$ the polynomial
\[
x_0^{d-w_1q-1}B_{0,1}=x_0^{d-w_1q-1}(x_0x_1^q-x_0^{w_1(q-1)+1}x_1)
\]
is homogeneous of degree $d$ and has $p_m$ rational zeroes. If $d \leq qw_1$, set $d_1=\Floorfrac{d-1}{w_1}\leq q-1$ and write $d=a_0^\star+d_1w_1$ with $1 \leq a_0^\star \leq w_1$ as in Lemma \ref{lem:FB_x0}. Consider the polynomial
\begin{equation}\label{eq:min_pol}
f=x_0^{a_0^\star}\prod_{i=1}^{d_1}(x_1-\alpha_ix_0^{w_1}),    
\end{equation}
where $\alpha_1,\dots,\alpha_{d_1}$ are distinct nonzero elements of $\fq$. Then $f$ is a homogeneous polynomial of degree $d$ with $(d_1+1)q^{m-1}+p_{m-2}$ zeroes, and we have thus obtained the desired inequality. 

\medskip

Now we prove the inequality $e_q(d;w_0,w_1,\dots,w_m)\leq \min\{p_m,dq^{m-1}+p_{m-2}\}$. It is enough to consider the case $d\leq w_1q$, as $e_q(d;w_0,w_1,\dots,w_m)\leq p_m$ and $\left(\Floorfrac{d-1}{w_1}+1\right)q^{m-1}+p_{m-2}\geq p_m$ for $d\geq w_1q+1$.

Let $f\in \fq[x_0,\dots,x_m]^w_d$, and let $\ini(f)=x_{j_0}^{a_{j_0}}\cdots x_s^{a_s}$ with $a_{j_0} \geq 1$. Without loss of generality, we may assume that $\ini(f)\in\Moni{d}{j_0}$.
\begin{itemize}
    \item If $j_0=0$, then Lemma \ref{lem:FB_x0} ensures that
    \[
    \size{V_{\PP(w)}(f)(\fq)} \leq p_m-\FB(\ini(f)) \leq  \left(\Floorfrac{d-1}{w_1}+1\right)q^{m-1}+p_{m-2}.
    \]
    \item Otherwise, we have to distinguish two cases:
    \begin{itemize}
        \item either $a_j\geq 1$ for some $j \geq \ell+1$ (hence $w_j > w_1$), in which case Lemma \ref{lem:ws>1} implies that
        \[\FB(\ini(f)) \geq \min_{\nu \in \Moni{d}{0}} \FB(\nu) = q^{m-1}\left(q-\Floorfrac{d-1}{w_1}+1\right),\]
        thus giving the expected upper bound on $\size{V_{\PP(w)}(f)(\fq)}$;
        \item or $\ini(f)=x_1^{a_1}\cdots x_\ell^{a_\ell}$ with $w_1 \mid d$ as in Lemma \ref{lem:var_weights=w_1}. In this case, we have $\Floorfrac{d-1}{w_1}=\frac{d}{w_1}-1$ and Lemma \ref{lem:var_weights=w_1} gives exactly the desired result.
    \end{itemize}
\end{itemize}   
\end{proof}

Serre \cite{serreZeroesProjectiveSpace} noted that in the classical projective space $(w_0=\dots=w_m=1)$, the bound is achieved exactly by the unions of hyperplanes that share a common linear variety of dimension $m-2$. Here, when $w_0=1$, the variety associated to the polynomial $f$ achieving the bound given in Equation \eqref{eq:min_pol} is the union of the hyperplane $x_0=0$ and some hypersurfaces of the form $x_1=\alpha x_0^{w_1}$ for $\alpha\in\fq$. These hypersurfaces may be considered as hyperplanes, since they contain $p_{m-1}$ $\fq$-points. They all contain the linear subvariety defined by $x_0=x_1=0$, which contains $p_{m-2}$ $\fq$-points.

\section{Generalizations to $w_0\geq 2$}\label{s:w0>1}

When $w_0 \geq 2$, there are still some cases where Theorem \ref{t:theorem_conjecture} can be applicable. If $\gcd(w)=\gamma>1$, then any degree $d$ such that $\fq[x_0,\dots,x_m]^w_d\neq\set{0}$ lies in the semigroup $\gen{w_0,\dots,w_m}_\NN$. By Lemma \ref{l:weightswithgcd}, for any nonzero $f \in \fq[x_0,\dots,x_m]^w_d$, we have 
\begin{equation}\label{eq:nb_pts_with_gcd}
    \abs{V_{\PP(w)}(f)(\fq)}=\abs{V_{\PP(w_0/\gamma,\dots,w_m/\gamma)}(f)(\fq)},
\end{equation}
where the polynomial $f$ on the right-hand side is considered in $\fq[x_0,\dots,x_m]^{(w_0/\gamma,\dots,w_m/\gamma)}_{d/\gamma}$. Then
\begin{equation}
e_q(d;w_0,w_1,\dots,w_m)=e_q(d/\gamma;w_0/\gamma,w_1/\gamma,\dots,w_m/\gamma).
\end{equation}
Thus, if $w_0=\gamma$, we can apply Theorem \ref{t:theorem_conjecture}. In what follows, we can therefore assume that $\gcd(w)=1$. A similar argument can be made with Delorme's reduction (Lemma \ref{l:delorme}), but then $d$ has to be a multiple of the greatest common divisor of the $m$ weights that have a common factor. 

Assume now that we cannot use Lemma \ref{l:weightswithgcd} or Lemma \ref{l:delorme} to reduce to the case with $w_0=1$. A natural path towards generalizing Theorem \ref{t:theorem_conjecture} could be expecting that when $d$ lies in the semigroup generated by $w_0$ and $w_1$, taking a polynomial $f$ giving a maximum number of zeroes in $\PP(w_0,w_1)(\fq)$ would define a variety in $\PP(w)$ with maximum number of $\fq$-zeroes. We will show that this is not the case in general, but first we study the case $m=1$ in detail.

\subsection{Case $m=1$}

Let us study the maximum number of $\fq$-zeroes of bivariate polynomials in $\fq[x_0,x_1]^w_d$ and the polynomials that achieve it. Arguing as above, we assume that $w_0$ and $w_1$ are coprime, and, although $\PP(w_0,w_1)$ is isomorphic to the straight projective line $\PP^1$, Delorme's reduction (Lemma \ref{l:delorme}) only enables us to state that
\begin{equation}
e_q(d;w_0,w_1)=e_q(d/(w_0w_1),1,1)=\frac{d}{w_0w_1}
\end{equation}
if the degree $d$ is divisible by $w_0w_1$. For general degrees, the corresponding isomorphism on the sheaves $\mathcal{O}_{\PP(w)}(d)$ is more involved (e.g., see \cite[Proposition~3C.7]{Beltrametti}). We reformulate this in terms of the denumerant $\den(d;w_0,w_1)$ to get the following bound. 

\begin{prop}\label{prop:bound_m=1}
Let $(w_0,w_1) \in \NN^2$ such that $\gcd(w_0,w_1)=1$ and $d \in \gen{w_0,w_1}_\NN$. Set 
\begin{equation}\label{eq:delta}
    \delta=\den(d;w_0,w_1)-1.
\end{equation} 
Then for every nonzero $f \in \fq[x_0,x_1]^w_d$, we have \[\abs{V_{\PP(w_0,w_1)}(f)(\fq)} \leq \begin{cases}
    \delta & \text{if both }w_0 \text{ and } w_1 \text{ divide } d,\\
    \delta+1 & \text{if either }w_0\text{ or } w_1 \text{ divides } d,\\
    \delta+2 & \text{if neither }w_0 \text{ nor } w_1 \text{ divides } d.\\
\end{cases}\]
\end{prop}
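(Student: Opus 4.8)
The plan is to work directly on $\PP(w_0,w_1)$ without invoking any isomorphism with $\PP^1$ on the level of line bundles, using instead the structure of the $\fq$-points and the factorization of a weighted-homogeneous binary form. First I would set up coordinates: the $\fq$-points of $\PP(w_0,w_1)$ are the point $P_\infty=(1:0)$, the point $P_0=(0:1)$, and the points $(1:\beta)$ for $\beta\in\fq^*$ (using the normalization afforded by $\gcd(w_0,w_1)=1$, which lets one rescale the first nonzero coordinate to $1$; concretely one should cite Lemma~\ref{l:delorme} or the discussion in the Appendix for the exact description of $\PP(w_0,w_1)(\fq)$). Thus $\abs{\PP(w_0,w_1)(\fq)}=q+1$, matching the projective line, but the key subtlety is that a monomial $x_0^{a_0}x_1^{a_1}$ of weighted degree $d$ vanishes at $P_\infty$ iff $a_1\ge 1$ and at $P_0$ iff $a_0\ge 1$, so whether the ``coordinate points'' are automatically forced zeros of \emph{every} degree-$d$ form depends exactly on the divisibility of $d$ by $w_0$ and by $w_1$.

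Next I would reduce a nonzero $f\in\fq[x_0,x_1]^w_d$ to a manageable shape. Write $f=x_0^{c_0}x_1^{c_1}g$ where $g$ is not divisible by $x_0$ or $x_1$; then $g$ is again weighted-homogeneous, $g(1:0)\ne 0$ and $g(0:1)\ne 0$, so the only $\fq$-zeros of $g$ are among the $q-1$ points $(1:\beta)$, $\beta\in\fq^*$. Because $\deg g=d-c_0w_0-c_1w_1$ and $g$ has no pure-$x_0$ or pure-$x_1$ term, the number of monomials appearing in $g$ (equivalently in $f$ after shifting) is bounded by $\den(d;w_0,w_1)$ minus the contributions of the pure powers; the precise bookkeeping is: the $\fq$-vector space $\fq[x_0,x_1]^w_d$ has dimension $\den(d;w_0,w_1)=\delta+1$, and $V_{\PP(w_0,w_1)}(f)(\fq)$ is contained in the union of $\{P_\infty\text{ if }c_1\ge1\}$, $\{P_0\text{ if }c_0\ge1\}$, and $Z(g)\cap\{(1:\beta):\beta\in\fq^*\}$. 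The heart of the argument is then a one-variable dehomogenization: substituting $x_0=1$ turns $g$ into a univariate polynomial in $x_1$ of a controlled degree, whose number of distinct nonzero roots over $\fq$ is at most that degree, and one checks that this degree is at most $\delta-1$ in the ``neither divides'' case, at most $\delta$ when exactly one of $w_0,w_1$ divides $d$ (because one pure power is then available in $\fq[x_0,x_1]^w_d$, shrinking the room for $g$ by one), and at most $\delta-1$ plus the two forced coordinate points only when relevant — I would organize this as the three cases in the statement, in each case adding $1$ for $P_\infty$ exactly when $x_0\mid d$ forces $c_1$ could be made $0$ etc. The cleanest formulation is: $\abs{V_{\PP(w_0,w_1)}(f)(\fq)}\le \#\{\text{pure powers }x_i^{a}\in\M_d\}+\deg_{x_1}(g|_{x_0=1})$ and then bound each term via the denumerant.

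The step I expect to be the main obstacle is the precise matching of the degree of the dehomogenized polynomial with $\den(d;w_0,w_1)$ in the three divisibility regimes — i.e., proving the sharp inequality rather than an inequality off by one. The danger is double-counting: a pure power $x_0^{d/w_0}$ (when it exists) contributes the coordinate point $P_0$ \emph{and} occupies one dimension of $\fq[x_0,x_1]^w_d$, so one must be careful that the ``$+1$'' or ``$+2$'' for coordinate points is not already absorbed into $\delta$. I would resolve this by first handling the generic sub-case where $f$ is divisible by neither $x_0$ nor $x_1$ (so only the $(1:\beta)$ points matter and the bound is $\le$ the number of non-pure monomials $= \den(d;w_0,w_1)-\#\{\text{pure powers}\}$, which gives $\delta-1$, $\delta-1$, or $\delta$ according to whether two, one, or zero pure powers exist — note two pure powers exist iff both weights divide $d$), and then separately account, by a short case check, for the at most one or two forced coordinate points contributed when $x_0$ or $x_1$ does divide $f$; adding these back yields exactly $\delta$, $\delta+1$, $\delta+2$ respectively. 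An alternative, which I would keep in reserve if the monomial-counting becomes awkward, is to use Lemma~\ref{l:delorme} to pass to $\PP^1$ after multiplying $f$ by a suitable monomial to make its degree divisible by $w_0w_1$, then use Serre's bound $\abs{V_{\PP^1}(\cdot)}\le \deg$ on the line and translate back, tracking how many of the $q+1$ points of $\PP^1$ come from genuine $\fq$-points of $\PP(w_0,w_1)$ versus the at most two coordinate points.
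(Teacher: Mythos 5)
Your overall skeleton is the same as the paper's: split off the two coordinate points, exploit the fact that the exponent vectors of $\M_d$ form an arithmetic progression of length $\den(d;w_0,w_1)$, and bound the remaining zeros by a univariate root count. But the central quantitative step is wrong as you have set it up, in two places. First, the dehomogenization $g|_{x_0=1}$ is a polynomial in $x_1$ of degree roughly $w_0\delta$, not $\le\delta$, and its roots in $\fq$ are \emph{not} in bijection with the torus $\fq$-points of $\PP(w_0,w_1)$: when $\gcd(w_0,q-1)>1$ distinct $\beta\in\fq^*$ can represent the same point $(1:\beta)$, and some $\fq$-points of the quotient have no representative with $\beta\in\fq$ at all. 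The paper avoids this by writing every $f\in\fq[x_0,x_1]^w_d$ as $x_0^{i_0^*}x_1^{i_1^*}\,g\bigl(x_1^{w_0}/x_0^{w_1}\bigr)$ with $\deg g\le\delta$, where $d=w_0i_0^*+w_1i_1^*$, $0\le i_1^*\le w_0-1$: the invariant $t=x_1^{w_0}/x_0^{w_1}$ identifies the torus $\fq$-points with $\fq^*$ (it is the map $\psi$ of the Appendix in disguise), so the torus zeros are exactly the roots of $g$ in $\fq^*$, at most $\delta$ of them. You need this change of variable, not the naive substitution $x_0=1$.

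Second, your bookkeeping for the generic sub-case is false: you claim that an $f$ divisible by neither $x_0$ nor $x_1$ has at most $\den(d;w_0,w_1)-\#\{\text{pure powers}\}$ torus zeros. Take $w=(1,1)$, $d=2$, $f=(x_1-x_0)(x_1-2x_0)$: it is divisible by neither variable, has $2=\delta$ torus zeros, but only one non-pure monomial. There is no ``number of terms minus one'' bound on roots over a finite field. The correct accounting (which is what the paper does) is: torus zeros $\le\deg g\le\delta$ always; the point $(0:1)$ is a forced zero exactly when $w_1\nmid d$ (then $i_0^*-\delta w_1\ge 1$, so $x_0\mid f$), and $(1:0)$ is forced exactly when $w_0\nmid d$; and when a coordinate point is \emph{not} forced but still vanishes, this costs either the leading coefficient or the constant term of $g$, hence one torus root — which is why the totals come out to $\delta$, $\delta+1$, $\delta+2$ rather than a uniform $\delta+2$. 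Your reserve plan (pad the degree to a multiple of $w_0w_1$ and apply Serre on $\PP^1$) also does not work without care, since the padding can increase $\lfloor d/(w_0w_1)\rfloor$ by more than the divisibility defect allows.
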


\begin{proof} 
If $w_0w_1 \mid d$, then it is clear that $\delta=\frac{d}{w_0w_1}$ and the result follows from Equation~\eqref{eq:nb_pts_with_gcd}. Let us assume that $w_0w_1 \nmid d$. Since $\gcd(w_0,w_1)=1$, one can uniquely write $d=w_0 i^*_0 + w_1 i^*_1$ with $0 \leq i_0^*$ and $0 \leq i^*_1 \leq w_0-1$, and
\[\fq[x_0,x_1]^w_d=\Span \set{x_0^{i^*_0-\ell w_1}x_1^{i^*_1 + \ell w_0}: \: 0 \leq \ell \leq \delta  }. \]
Then any $f \in \fq[x_0,x_1]^w_d$ can be written as
\[f=x_0^{i^*_0}x_1^{i^*_1} g\left(\frac{x_1^{w_0}}{x_0^{w_1}}\right)^\ell,\]
where $g$ is a univariate polynomial of degree at most $\delta$. 
Moreover, $w_0$ divides $d$ if and only if $i^*_1= 0$. Then $f$ is necessarily divisible by $x_1$ if $w_0$ does not divide $d$. Similarly, $w_1$ divides $d$ if and only if $i^*_0=w_1\delta$. Indeed, we can write the Euclidean division of $d/w_1$ by $w_0$ as
\[
d=(hw_0+r)w_1=hw_0w_1+rw_1,
\]
where $r\leq w_0-1$. By uniqueness of the couple $(i^*_0,i^*_1)$, we get $i^*_1=r$ and $i^*_0=hw_1$, but $h=\Floorfrac{d/w_1}{w_0}=\Floorfrac{d}{w_0w_1}=\delta$ by the definition of $\delta$ and the fact that $\den(d;w_0,w_1)=\den(d/w_1;w_0,1)$. Moreover, $f$ is necessarily divisible by $x_0$ if $w_1$ does not divide $d$.

On the affine open patch $x_0,x_1 \neq 0$, $f$ admits at most $\deg(g)$ zeroes. Moreover, $f$ vanishes at $(0:1)$ if and only if $x_0 \mid f$, which is always the case when $w_1 \nmid d$. Similarly, $f$ vanishes at $(1:0)$ if and only if $x_1 \mid f$, which is always the case when $w_0 \nmid d$. This gives the expected bound.
\end{proof}

The denumerant $\den(d;w_0,w_1)$ appears in the bound in Proposition~\ref{prop:bound_m=1}. It is well known that this quantity can be made explicit using Euclidean division. The following result is well referenced in the case where $w_0$ and $w_1$ are coprime (e.g., see \cite[\textsection 4.4]{denumerant}). However, since we could not find a reference in the general case, we provide a short proof for the sake of completeness. 

\begin{lem}\label{lem:den}
    Let $(w_0,w_1) \in \NN_{\geq 1}^{2}$. Let $d \in \N$. Write the Euclidean division of $d$ by $\lcm(w_0,w_1)$ as 
        $d=\lambda \lcm (w_0,w_1) + \rho$ with $0\leq \rho< \lcm (w_0,w_1).$ Then
    \[\den(d;w_0,w_1)=\begin{cases}
        \lambda+\den(\rho;w_0,w_1) &\text{if } \gcd(w_0,w_1) \mid d,\\
        0 &\text{otherwise,}        
    \end{cases}\]
    with $\den(\rho;w_0,w_1) \in \set{0,1}$.
\end{lem}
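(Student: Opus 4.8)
The plan is to first dispose of the case $\gcd(w_0,w_1)\nmid d$: if $g=\gcd(w_0,w_1)$ does not divide $d$, then no nonnegative combination $w_0i_0+w_1i_1$ can equal $d$, since any such combination is a multiple of $g$; hence $\den(d;w_0,w_1)=0$, which matches the claimed value. So from now on assume $g\mid d$, and write $L=\lcm(w_0,w_1)$, $w_0=g u_0$, $w_1=g u_1$ with $\gcd(u_0,u_1)=1$ and $L=g u_0 u_1$. I would reduce to the coprime case by dividing through by $g$: the solutions $(i_0,i_1)\in\NN^2$ of $w_0i_0+w_1i_1=d$ are exactly the solutions of $u_0i_0+u_1i_1=d/g$, so $\den(d;w_0,w_1)=\den(d/g;u_0,u_1)$, and likewise $\den(\rho;w_0,w_1)=\den(\rho/g;u_0,u_1)$ once one checks $g\mid\rho$ (which holds because $g\mid d$ and $g\mid L$). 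This reduces everything to proving the statement when $\gcd(w_0,w_1)=1$, i.e. $L=w_0w_1$.

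For the coprime case, the key structural fact is that if $(i_0,i_1)$ and $(i_0',i_1')$ are two solutions of $w_0i_0+w_1i_1=d$, then $w_0(i_0-i_0')=w_1(i_1'-i_1)$, and coprimality forces $w_1\mid(i_0-i_0')$ and $w_0\mid(i_1'-i_1)$; so all solutions are obtained from one particular solution by the moves $(i_0,i_1)\mapsto(i_0-w_1,i_1+w_0)$. Thus the solution set, if nonempty, consists of the pairs $(i_0^*-kw_1,\,i_1^*+kw_0)$ for $k$ ranging over an interval of integers, where I take the distinguished solution with $0\le i_1^*\le w_0-1$ (this exists and is unique by the division algorithm modulo $w_0$, using $\gcd(w_0,w_1)=1$). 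The constraint $i_0^*-kw_1\ge 0$ then says $0\le k\le\lfloor i_0^*/w_1\rfloor$, so $\den(d;w_0,w_1)=\lfloor i_0^*/w_1\rfloor+1$. Now write $d=w_0i_0^*+w_1i_1^*$ with $0\le i_1^*\le w_0-1$; then $\lfloor d/(w_0w_1)\rfloor=\lfloor i_0^*/w_1+i_1^*/w_0\rfloor=\lfloor i_0^*/w_1\rfloor$ because $i_1^*/w_0<1$ and one checks the fractional part of $i_0^*/w_1$ plus $i_1^*/w_0$ stays below $1$ — more carefully, $i_0^*\bmod w_1$ and $i_1^*$ are linked by $w_0i_0^*\equiv d\pmod{w_1}$ and $w_1i_1^*\equiv d\pmod{w_0}$, and a short congruence computation shows $w_0(i_0^*\bmod w_1)+w_1i_1^*<w_0w_1$ exactly when $i_0^*\bmod w_1$ and $i_1^*$ are the canonical residues, forcing the floor to split as stated. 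Hence $\den(d;w_0,w_1)=\lfloor d/(w_0w_1)\rfloor+1$.

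Finally I combine this with the Euclidean division $d=\lambda L+\rho$, $0\le\rho<L$. Since $\gcd(w_0,w_1)=1$ and $L=w_0w_1$, the formula above gives $\den(d;w_0,w_1)=\lfloor d/L\rfloor+1=\lambda+1$; and applied to $\rho$ it gives $\den(\rho;w_0,w_1)=\lfloor\rho/L\rfloor+1=1$ when $\rho$ itself is representable. It remains to handle whether $\rho$ is representable: $\rho$ lies in $[0,L)$, and $d=\lambda L+\rho$ is representable (we are in the case $\gcd\mid d$, and $d=\lambda L+\rho$ with $\lambda L$ obviously representable), but this does not immediately make $\rho$ representable. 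Here I split: if $\rho$ is representable, $\den(\rho;w_0,w_1)=1$ and $\den(d;w_0,w_1)=\lambda+1=\lambda+\den(\rho;w_0,w_1)$; if $\rho$ is not representable, then every representation of $d=w_0i_0+w_1i_1$ must "use up" at least one full block $L=w_0w_1$, i.e. have $i_0\ge w_1$ or $i_1\ge w_0$, and subtracting the corresponding $w_0w_1$ moves bijectively between representations of $d$ and representations of $d-L=(\lambda-1)L+\rho$; iterating, $\den(d;w_0,w_1)=\den(\rho;w_0,w_1)+\lambda$ still holds with $\den(\rho;w_0,w_1)=0$. In both subcases the claimed identity holds, and $\den(\rho;w_0,w_1)\in\{0,1\}$ since $0\le\rho<L=\lcm(w_0,w_1)$ (a number strictly below the lcm has at most one representation, by the solution-counting above). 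The main obstacle I anticipate is the bookkeeping in the reduction $\den(d;w_0,w_1)=\den(d/g;u_0,u_1)$ together with the clean extraction of the floor identity $\lfloor d/L\rfloor=\lfloor i_0^*/w_1\rfloor$ in the coprime case; everything else is routine.
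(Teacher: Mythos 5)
Your reduction to the coprime case (dividing everything by $g=\gcd(w_0,w_1)$ and disposing of the case $g\nmid d$) is exactly what the paper does; the paper then simply cites the coprime case to a reference, whereas you try to prove it from scratch, and that is where the argument breaks. The structural description of the solution set as $\set{(i_0^*-kw_1,\,i_1^*+kw_0) : 0\le k\le \Floorfrac{i_0^*}{w_1}}$ with $0\le i_1^*\le w_0-1$ is correct and gives $\den(d;w_0,w_1)=\Floorfrac{i_0^*}{w_1}+1$. But the floor identity you then assert, $\Floorfrac{d}{w_0w_1}=\Floorfrac{i_0^*}{w_1}$, is false in general, and so is the resulting unconditional formula $\den(d;w_0,w_1)=\Floorfrac{d}{w_0w_1}+1$. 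Take $(w_0,w_1)=(2,3)$ and $d=7$: the only representation is $7=2\cdot2+3\cdot1$, so $\den(7;2,3)=1$, while $\Floorfrac{7}{6}+1=2$; here $i_0^*=2$, $i_1^*=1$, and $w_0(i_0^*\bmod w_1)+w_1i_1^*=7>w_0w_1$, so the claim that ``the fractional parts stay below $1$'' fails. What is true is that $w_0(i_0^*\bmod w_1)+w_1i_1^*$ lies in $[0,2w_0w_1)$ and is congruent to $d$ modulo $w_0w_1$, hence equals either $\rho$ (exactly when $\rho$ is representable, giving $\Floorfrac{i_0^*}{w_1}=\lambda$) or $\rho+w_0w_1$ (exactly when $\rho$ is not representable, giving $\Floorfrac{i_0^*}{w_1}=\lambda-1$); the dichotomy $\den(\rho;w_0,w_1)\in\set{0,1}$ in the statement lives precisely in this alternative and cannot be suppressed.

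Your final paragraph does split on whether $\rho$ is representable, and the intended conclusions in both subcases are the correct ones, but the first subcase leans on the false identity above, and the second rests on a ``bijection'' that is not well defined: a representation of $d$ may satisfy both $i_0\ge w_1$ and $i_1\ge w_0$, and the two possible subtractions land on different representations of $d-L$. The clean repair, entirely within your own setup, is to drop the floor identity and compute $\Floorfrac{i_0^*}{w_1}$ directly: write $i_0^*=\mu w_1+\nu$ with $0\le\nu<w_1$, so that $d=\mu w_0w_1+(w_0\nu+w_1 i_1^*)$ with $0\le w_0\nu+w_1 i_1^*\le 2w_0w_1-w_0-w_1$; deciding whether this quantity is $<w_0w_1$ or not identifies $\mu$ with $\lambda$ or $\lambda-1$ and simultaneously determines $\den(\rho;w_0,w_1)$. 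With that fix your proof is complete and, unlike the paper's one-line appeal to the literature, self-contained in the coprime case.
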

\begin{proof}
      Set $\gamma=\gcd(w_0,w_1)$. When $\gamma=1$, the result follows from \cite[\textsection 4.4]{denumerant}. Now assume that $\gamma>1$. Then $\den(d;w_0,w_1)=0$ if $\gamma \nmid d$. Otherwise, write $w_0=\gamma w_0'$ and $w_1=\gamma w'_1$. The integer solutions of $w_0x+w_1 y= d$ are clearly the same as the ones of $w'_0x+w'_1 y= d/\gamma$, i.e., $\den(d;w_0,w_1)=\den(d/\gamma;w'_0,w'_1)$. To conclude, it suffices to notice that $\rho$ is equal to $\gamma$ times the remainder of the Euclidean division $d/\gamma$ by $w'_0w'_1$.

\end{proof}

Now we are ready to completely determine $e_q(d;w_0,w_1)$ for any degree $d\geq 1$. 

\begin{cor}\label{cor:m=1}
    Let $(w_0,w_1) \in \NN^2$ and $d\in \gen{w_0,w_1}_\NN$. Write the Euclidean division of $d$ by $\lcm(w_0,w_1)$ as
    \[
        d=\lambda \lcm (w_0,w_1) + \rho \text{ with } 0\leq \rho< \lcm (w_0,w_1).
    \]
     Then 
    \[e_q(d;w_0,w_1)=\min \set{q+1,
    \begin{cases}
    \lambda & \text{if } \lcm(w_0,w_1) \mid d,\\
    \lambda +1 & \text{if either } w_0 \mid d \text{ or } w_1 \mid d,\\
    \lambda +1+\den(\rho;w_0,w_1) & \text{if both } w_0 \nmid d \text{ and } w_1 \nmid d,
    \end{cases}}.
    \]
\end{cor}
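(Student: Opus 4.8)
The plan is to obtain the upper bound from Proposition~\ref{prop:bound_m=1} together with the trivial inequality $\abs{V_{\PP(w_0,w_1)}(f)(\fq)}\le\abs{\PP(w_0,w_1)(\fq)}=q+1$, after rewriting the bound of Proposition~\ref{prop:bound_m=1} in terms of $\lambda$ and $\den(\rho;w_0,w_1)$ via Lemma~\ref{lem:den}; and then to match it with an explicit family of polynomials. As a preliminary reduction, since $d\in\gen{w_0,w_1}_\NN$ forces $\gcd(w_0,w_1)\mid d$, Equation~\eqref{eq:nb_pts_with_gcd} lets us assume $\gcd(w_0,w_1)=1$. Indeed, writing $\gamma=\gcd(w_0,w_1)$, one has $\lcm(w_0,w_1)=\gamma\lcm(w_0/\gamma,w_1/\gamma)$, so the Euclidean division of $d/\gamma$ by $\lcm(w_0/\gamma,w_1/\gamma)$ has the same quotient $\lambda$ and remainder $\rho/\gamma$; moreover $\den(\rho;w_0,w_1)=\den(\rho/\gamma;w_0/\gamma,w_1/\gamma)$, and each of the conditions $w_0\mid d$, $w_1\mid d$, $\lcm(w_0,w_1)\mid d$ is equivalent to the corresponding condition for the reduced weights and degree. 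Hence the claimed formula is invariant under this reduction.

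Assume $\gcd(w_0,w_1)=1$ and set $\delta=\den(d;w_0,w_1)-1$ as in Proposition~\ref{prop:bound_m=1}. By Lemma~\ref{lem:den}, $\den(d;w_0,w_1)=\lambda+\den(\rho;w_0,w_1)$ with $\den(\rho;w_0,w_1)\in\set{0,1}$, so $\delta=\lambda-1+\den(\rho;w_0,w_1)$. Since $w_0$ and $w_1$ both divide $\lcm(w_0,w_1)$ we get $w_0\mid d\iff w_0\mid\rho$ and $w_1\mid d\iff w_1\mid\rho$; and if $w_0\mid\rho$, or $w_1\mid\rho$, or $\lcm(w_0,w_1)\mid d$ (so that $\rho=0$), then $\rho\in\gen{w_0,w_1}_\NN$, hence $\den(\rho;w_0,w_1)=1$ and $\delta=\lambda$. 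Substituting into Proposition~\ref{prop:bound_m=1} shows that, for every nonzero $f\in\fq[x_0,x_1]_d^w$, the quantity $\abs{V_{\PP(w_0,w_1)}(f)(\fq)}$ is at most $\lambda$ when $\lcm(w_0,w_1)\mid d$, at most $\lambda+1$ when exactly one of $w_0,w_1$ divides $d$, and at most $\lambda+1+\den(\rho;w_0,w_1)$ when neither does; combined with the bound $q+1$ this gives the inequality ``$\le$''. Denoting this case-by-case value by $B$, note that $B$ equals $\delta$ plus the number of $w_i\in\set{w_0,w_1}$ that do not divide $d$, and in particular $\delta\le B$.

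For the reverse inequality, I would use the description from the proof of Proposition~\ref{prop:bound_m=1}: writing $d=w_0i_0^*+w_1i_1^*$ with $0\le i_1^*\le w_0-1$ and setting $c_0=i_0^*-\delta w_1\in\set{0,\dots,w_1-1}$, one has $w_0\nmid d\iff i_1^*\ge1$ and $w_1\nmid d\iff c_0\ge1$, while $\fq[x_0,x_1]_d^w$ is spanned by the monomials $x_0^{i_0^*-\ell w_1}x_1^{i_1^*+\ell w_0}$ for $0\le\ell\le\delta$. Then, for any integers $e,e'\ge0$ and any subset $S\subseteq\fq^*$ with $e+e'+\size{S}=\delta$, the polynomial
\[
f=x_0^{\,c_0+w_1e}\,x_1^{\,i_1^*+w_0e'}\prod_{\gamma\in S}\bigl(x_1^{w_0}-\gamma x_0^{w_1}\bigr)
\]
is a nonzero element of $\fq[x_0,x_1]_d^w$. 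Recalling that $\PP(w_0,w_1)(\fq)$ consists of $(1:0)$, $(0:1)$, and $q-1$ points with both coordinates nonzero, the latter in bijection with $\fq^*$ via $\gamma=x_1^{w_0}/x_0^{w_1}$ (as $\PP(w_0,w_1)\cong\PP^1$), one checks that $f$ vanishes at $(0:1)$ iff $c_0\ge1$ or $e\ge1$, at $(1:0)$ iff $i_1^*\ge1$ or $e'\ge1$, and at the point of parameter $\gamma\in\fq^*$ iff $\gamma\in S$.

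The last step is to choose $e$, $e'$, and $S$ so that $f$ has exactly $\min\set{q+1,B}$ zeroes; taking $\size{S}=\min\set{\delta,q-1}$, then $e=e'=0$ if $\delta\le q-1$, and otherwise spreading the remaining exponent $\delta-(q-1)$ over $e,e'$ so as to force whichever of $(0:1)$, $(1:0)$ is not already a zero of the monomial part, should work. I expect the main obstacle to be the bookkeeping here: one has to verify, across all combinations of the size of $\delta$ (only $\delta\in\set{0,\dots,q+1}$ matters once $B\le q+1$, while $B>q+1$ calls for all $q+1$ points) with the divisibility pattern of $d$, that the chosen triple $(e,e',S)$ realizes precisely $\min\set{q+1,B}$, the delicate cases being $\delta\in\set{q-1,q}$. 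Comparing with the inequality ``$\le$'' then yields the stated equality.
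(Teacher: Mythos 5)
Your proposal is correct and follows essentially the same route as the paper: the upper bound comes from Proposition~\ref{prop:bound_m=1} rewritten via Lemma~\ref{lem:den}, and the lower bound from explicit products of degree-$\lcm(w_0,w_1)$ ``linear'' factors $x_1^{w_0}-\gamma x_0^{w_1}$ times a monomial. The only differences are organizational --- you reduce to $\gcd(w_0,w_1)=1$ up front and package the paper's three case-by-case constructions into a single family parametrized by $(e,e',S)$ with $e+e'+\size{S}=\delta$, whose final bookkeeping, while left partly implicit, does check out in all cases.
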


\begin{proof}
The proof is two-fold. For each case, we use the formula for the denumerant from Lemma~\ref{lem:den} to check that the bound in Proposition~\ref{prop:bound_m=1} agrees with the one given here, and then we exhibit polynomials that reach this bound.

Set $\gamma=\gcd(w_0,w_1)$ and write $w_0=\gamma w'_0,$ $w_1=\gamma w'_1$. The idea is to build a polynomial as a product of as many ``linear factors'' of the type $x_1^{w_0'}-\alpha x_0^{w_1'}$, with $\alpha\in \fq$, and then, depending on the remainder $\rho$, see if we can construct a polynomial of degree $d$. We say these factors are ``linear'' since they have degree $\lcm(w_0,w_1)$, and by using Lemmas \ref{l:weightswithgcd} and \ref{l:delorme} we can map them to polynomials of degree $1$ in $\PP^1$. 

If both $w_0$ and $w_1$ divide $d$ (i.e., $\lcm(w_0,w_1) \mid d$) then $\rho=0$, which means $\den(\rho;w_0,w_1)=1$. We thus get 
\[\delta=\den(d;w_0,w_1)-1=\lambda+1-1=\lambda.\]
Take $J \subseteq \fq$ of cardinality $\abs{J}=\min\set{\lambda,q}$. The polynomial 
\[f=x_0^{(\lambda-\abs{J})w'_1}\prod_{\alpha \in J}(x_1^{w'_0}-\alpha x_0^{w'_1})\]
has degree $d$ and vanishes at $\abs{J}=\lambda$ $\fq$-points of $\PP(w_0,w_1)$ if $\lambda\leq q$, and on the whole set of $\fq$-points of $\PP(w_0,w_1)$ if $\lambda \geq q+1$.

\medskip

If either $w_0$ or $w_1$ divides $d$, but not both, then $\den(\rho;w_0,w_1)=1$ 
and
\[\delta+1=\den(d;w_0,w_1)=\lambda+1.\]

Take $J \subseteq \fq$ of cardinality $\abs{J}=\min\set{\lambda,q}$. If $w_0 \mid d$, then $w_0 \mid \rho$ and the polynomial
\[f=x_0^{\rho / w_0+(\lambda-\abs{J})w'_1}\prod_{\alpha \in J}(x_1^{w'_0}-\alpha x_0^{w'_1})\]
has degree $d$ and vanishes at $\abs{J}+1$ $\fq$-points of $\PP(w_0,w_1)$.
If $w_1 \mid d$, exchanging the role of the variables, one can easily check that the polynomial
\[f=x_1^{\rho/w_1+(\lambda-\abs{J})w'_0}\prod_{\alpha \in J}(x_0^{w'_1}-\alpha x_1^{w'_0})\]
has the same properties.

\medskip

If neither $w_0$ nor $w_1$ divides $d$, then 
\[\delta+2=\den(d;w_0,w_1)+1=\lambda+\den(\rho;w_0,w_1)+1.\]
\begin{itemize}
    \item If $\rho \in  \gen{w_0,w_1}_\NN$, then there exists a nonzero polynomial $g \in \fq[w_0,w_1]^w_\rho$. 
    Since neither $w_0$ nor $w_1$ divides $d$, and thus $\rho$, the polynomial $g$ is divisible by both $x_0$ and $x_1$. Take $J^\star \subseteq \fq^\star$ of cardinality $\min\set{\lambda,q-1}$. The polynomial 
\[f=gx_1^{(\lambda-\abs{J^\star})w'_0}\prod_{\alpha \in J^\star}(x_1^{w'_0}-\alpha x_0^{w'_1})\]
has degree $d$ and vanishes at $\abs{J^\star}+2=\min\set{\lambda +2,q+1}$ $\fq$-points $\PP(w_0,w_1)$.
     \item If $\rho \notin  \gen{w_0,w_1}_\NN$, then $\den(\rho+\lcm(w_0,w_1);w_0,w_1)\geq 1$ by Lemma \ref{lem:den}, which implies $\rho+\lcm(w_0,w_1) \in \gen{w_0,w_1}_\NN$. As before, there exists a nonzero polynomial $g \in \fq[w_0,w_1]^w_{\rho+\lcm(w_0,w_1)}$ that is divisible by both $x_0$ and $x_1$. Take $K^\star \subseteq \fq^\star$ of cardinality $\min\set{\lambda-1,q-1}$. The polynomial 
\[f=gx_1^{(\lambda-1-\abs{K^\star})w'_0}\prod_{\alpha \in K^\star}(x_1^{w'_0}-\alpha x_0^{w'_1})\]
has degree $d$ and vanishes at $\abs{K^\star}+2=\min\set{\lambda +1,q+1}$ $\fq$-points of $\PP(w_0,w_1)$.
\end{itemize}
\end{proof}

\begin{rem}\label{rem:w0_divides_d_formula}
    When $w_0$ divides $d$, we can gather the first two cases to get a single expression, noticing that 
    \[\Floorfrac{d-1}{\lcm(w_0,w_1)}+1=\begin{cases}
        \lambda &\text{if } w_1 \text{ also divides } d,\\
        \lambda+1 &\text{otherwise.}
        \end{cases}\]
\end{rem}

\subsection{Case $m>1$}

By Corollary~\ref{cor:m=1} and Remark~\ref{rem:w0_divides_d_formula}, taking a polynomial $f$ with maximum number of zeroes in $\PP(w_0,w_1)(\fq)$ and considering the associated variety in $\PP(w)$ gives
   \begin{equation}\label{eq:false_conj}
e_q(d;w_0,w_1,\dots,w_m)\geq \min\set{p_m,\left(\Floorfrac{d-1}{\lcm(w_0,w_1)}+1+\epsilon\right)q^{m-1}+p_{m-2}},    
   \end{equation}
   where $\epsilon=\begin{cases}
       \den(\rho;w_0,w_1) &\text{if } w_0 \nmid d \text{ and } w_1 \nmid d,\\
       0 &\text{otherwise.}
   \end{cases}$
   
   For $w_0=1$, we recover Theorem \ref{t:theorem_conjecture}, and we proved it to be an equality in this case. For $w_0 \geq 2$, we have found that equality does not always hold, i.e., it is no longer always true that there exist polynomials with maximum number of zeroes depending only on the variables $x_0$ and $x_1$, as in Equation \eqref{eq:min_pol}.

\begin{ex}\label{ex:w0>1}
    For $w=(2,3,5)$, we computed with {\scshape Magma} \cite{magma} all the polynomials with maximum number of zeroes over $\fq$ for different degrees $d$. For all the values of $q$ we tested, these are of the form 
    \[f=\begin{array}{lll}
        c x_0(x_2-\alpha x_0x_1) &\text{or } x_0^2x_1 & \text{for } d=7,\\
        c x_0x_1(x_2-\alpha x_0x_1) && \text{for } d=10,\\
        c x_0(x_2-\alpha x_0x_1)(x_2-\beta x_0x_1) &\text{or } c x_0^2x_1(x_2-\alpha x_0x_1) & \text{for } d=12,
    \end{array}        
    \]
    with $c \in \fq^\star$ and distinct $\alpha,\beta \in \fq$. The associated variety is always a union of lines, each containing $q+1$  $\fq$-points. However contrary to the case $w_0=1$, they do not always meet at a common point. Moreover, among the varieties with maximum number of $\fq$-points of degree $d=7$, we have $x_0^2x_1=0$, which consists of the union of a double line and a simple one. For $d=10$, these varieties are unions of three lines forming a triangle with the singular points $(1:0:0)$, $(0:1:0)$ and $(0:0:1)$ as vertices.
    For $d=12$ when $f=c x_0(x_2-\alpha x_0x_1)(x_2-\beta x_0x_1)$, the three lines $x_0=0$, $x_2=\alpha x_0x_1$ and $x_2=\beta x_0x_1$ all meet at $(0:1:0)$ and the two latter also meet at $(1:0:0)$.
    Hence we conjecture that
    \[e_q(d;2,3,5)=\begin{cases}
        2q+1 &\text{if }d=7,\\
        3q &\text{if }d=10 \text{ or } d=12.
    \end{cases}\]
    It is worth noting that, for $d\leq 11$ different from $7$ and $10$, equality holds in Equation \eqref{eq:false_conj} in all the test we ran. Since, for example, $3q$ is not of the form $aq^{m-1}+p_0=aq+1$, a simple modification of the coefficient of $q^{m-1}$ in the bound from Equation \eqref{eq:false_conj} would not be sufficient to obtain a general formula for the case $w_0>1$. This illustrates how intricate the case $w_0>1$ can be. 
\end{ex}

    Nevertheless, it is still possible to give some upper bounds on $e_q(d;w_0,w_1,\dots,w_m)$ under some hypotheses.

\begin{prop}\label{p:upper_bound}
Assume $\gcd(w_0,w_i,q-1)=1$, for all $1\leq i \leq m$. Then
$$
e_q(d;w_0,w_1,\dots,w_m)\leq \min\set{p_m,\left(\Floorfrac{d-1}{w_1}+1\right)q^{m-1}+p_{m-2}}.
$$
\end{prop}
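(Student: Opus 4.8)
The plan is to deduce the bound for $\PP(w)$ with $w_0\ge 2$ from the already-settled case $w_0=1$ (Theorem~\ref{t:theorem_conjecture}), via a twisting construction combined with a double count over the affine cone. Since $\left(\Floorfrac{d-1}{w_1}+1\right)q^{m-1}+p_{m-2}\ge p_m$ as soon as $d\ge w_1q+1$, I may assume $d\le w_1q$ and I fix a nonzero $f\in\fq[x_0,\dots,x_m]^w_d$. Set $g=\gcd(w_0,q-1)$, let $c$ run over a set of $g$ representatives of $\fq^\ast/(\fq^\ast)^{w_0}$, and define
\[
\tilde f^{(c)}(x_0,\dots,x_m)=f\bigl(c\,x_0^{w_0},x_1,\dots,x_m\bigr).
\]
Each $\tilde f^{(c)}$ is nonzero and is homogeneous of degree $d$ for the weight vector $(1,w_1,\dots,w_m)$: a monomial $x_0^{a_0}\cdots x_m^{a_m}$ of $f$ satisfies $w_0a_0+w_1a_1+\cdots+w_ma_m=d$, and the corresponding monomial $c^{a_0}x_0^{w_0a_0}x_1^{a_1}\cdots x_m^{a_m}$ of $\tilde f^{(c)}$ has $(1,w_1,\dots,w_m)$-degree $w_0a_0+w_1a_1+\cdots+w_ma_m=d$. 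Moreover $\tilde f^{(c)}|_{x_0=0}=f|_{x_0=0}$ for all $c$, so writing $\nu_0=\abs{V_{\PP(w_1,\dots,w_m)}(f|_{x_0=0})(\fq)}$ and using that $\set{x_0=0}$ is a copy of $\PP(w_1,\dots,w_m)$ inside both $\PP(w)$ and $\PP(1,w_1,\dots,w_m)$, one gets
\[
\abs{V_{\PP(1,w_1,\dots,w_m)}(\tilde f^{(c)})(\fq)\cap\set{x_0=0}}=\nu_0=\abs{V_{\PP(w)}(f)(\fq)\cap\set{x_0=0}}\qquad\text{for every }c.
\]

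The second step is the counting identity linking the affine charts $\set{x_0\ne 0}$. Here I would first establish that every $\fq$-point of $\PP(w)$ (resp.\ of $\PP(1,w_1,\dots,w_m)$) has exactly $q-1$ homogeneous representatives in $\fq^{m+1}\setminus\set{0}$, and that for a point with $x_0\ne 0$ the coset of its first coordinate in $\fq^\ast/(\fq^\ast)^{w_0}$ is well defined; this is the point where the assumption $\gcd(w_0,w_i,q-1)=1$ is used (when $\gcd(w_0,q-1)>1$ the affine chart $\set{x_0\ne 0}$ is no longer $\A^m$, and one must control which representatives of a point lie in $\fq^{m+1}$). It follows that
\[
\abs{V_{\PP(w)}(f)(\fq)\cap\set{x_0\ne 0}}=\frac{1}{q-1}\,\#\set{y\in\fq^{m+1}:y_0\ne 0,\ f(y)=0},
\]
and similarly for each $\tilde f^{(c)}$. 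Since $(c,x_0)\mapsto c\,x_0^{w_0}$ is a $g$-to-one map from the $g$ chosen representatives times $\fq^\ast$ onto $\fq^\ast$, summing the previous identity over $c$ gives
\[
\sum_{c}\abs{V_{\PP(1,w_1,\dots,w_m)}(\tilde f^{(c)})(\fq)\cap\set{x_0\ne 0}}=g\cdot\abs{V_{\PP(w)}(f)(\fq)\cap\set{x_0\ne 0}}.
\]

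Finally I would apply Theorem~\ref{t:theorem_conjecture} on $\PP(1,w_1,\dots,w_m)$ to each $\tilde f^{(c)}$, obtaining $\abs{V_{\PP(1,w_1,\dots,w_m)}(\tilde f^{(c)})(\fq)}\le e_q(d;1,w_1,\dots,w_m)\le\left(\Floorfrac{d-1}{w_1}+1\right)q^{m-1}+p_{m-2}$. Subtracting the common quantity $\nu_0$, summing over the $g$ values of $c$, and dividing by $g$ yields
\[
\abs{V_{\PP(w)}(f)(\fq)\cap\set{x_0\ne 0}}\le\left(\Floorfrac{d-1}{w_1}+1\right)q^{m-1}+p_{m-2}-\nu_0,
\]
and adding back $\abs{V_{\PP(w)}(f)(\fq)\cap\set{x_0=0}}=\nu_0$ together with the trivial bound $p_m$ gives exactly the asserted inequality. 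The main obstacle is the structural input of the second step: giving a precise description of the $\fq$-points of $\PP(w)$ and their homogeneous representatives, so that the uniform count of $q-1$ representatives and the well-definedness of the first-coordinate coset hold — this is precisely what the hypothesis $\gcd(w_0,w_i,q-1)=1$ is there to guarantee. A secondary point requiring care is the degenerate cases ($\fq[x_0,\dots,x_m]^w_d=0$, or the weights $(1,w_1,\dots,w_m)$ falling outside the standing hypotheses of Theorem~\ref{t:theorem_conjecture}, e.g.\ when $w_1=\dots=w_m$), which are dealt with via Delorme's reduction and Lemma~\ref{lem:var_weights=w_1}.
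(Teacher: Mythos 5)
Your proposal is correct and lands on the same overall strategy as the paper, but it is substantially more self-contained. The paper's proof is two lines: it cites Proposition~3.4 of Aubry--Perret, which states that under the hypothesis $\gcd(w_0,w_i,q-1)=1$ one has $\abs{V_{\PP(w)}(f)(\fq)}\leq \frac{1}{r_0}\sum_{i=0}^{r_0-1}\abs{V_{\PP(w^*)}(g_i)(\fq)}$ with $r_0=\gcd(w_0,q-1)$ and $w^*=(1,w_1,\dots,w_m)$, and then applies Theorem~\ref{t:theorem_conjecture} to each $g_i$. What you have done is reprove that reduction from scratch: your twists $\tilde f^{(c)}=f(cx_0^{w_0},x_1,\dots,x_m)$ over coset representatives $c$ of $\fq^\ast/(\fq^\ast)^{w_0}$ are exactly the $g_i$, and your double count over affine representatives actually yields the averaged identity as an \emph{equality} $\abs{V_{\PP(w)}(f)(\fq)}=\frac{1}{g}\sum_c\abs{V_{\PP(w^*)}(\tilde f^{(c)})(\fq)}$ (after recombining the $\set{x_0=0}$ and $\set{x_0\neq 0}$ parts, the $\nu_0$ terms cancel), which is stronger than the cited inequality. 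The details all check out: each $\tilde f^{(c)}$ is nonzero because $x_0^{a_0}\mapsto c^{a_0}x_0^{w_0a_0}$ is injective on monomials, the fiber count for $(c,x_0)\mapsto cx_0^{w_0}$ is correct, and the ``exactly $q-1$ representatives'' fact holds. One inaccuracy worth noting: you locate the use of the hypothesis $\gcd(w_0,w_i,q-1)=1$ in that representative count, but that count is unconditional for any weight vector (it is the standard argument showing $\abs{\PP(w)(\fq)}=p_m$ independently of $w$, via $\lambda\cdot v\in\fq^{m+1}$ iff $\lambda\in\mu_{e(q-1)}$ with $e=\gcd(w_i:v_i\neq 0)$, giving $e(q-1)/e=q-1$ representatives); as far as I can see your argument never genuinely invokes the hypothesis, which in the paper is simply inherited from the citation of Aubry--Perret. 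Since the hypothesis is assumed anyway this does not affect correctness. The one loose end you rightly flag --- whether Theorem~\ref{t:theorem_conjecture} applies to $(1,w_1,\dots,w_m)$ when that vector falls outside the paper's standing assumption, e.g.\ $w_1=\dots=w_m\geq 2$ --- is shared by the paper's own proof and is handled by Delorme's reduction as you indicate.
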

\begin{proof}
By \cite[Prop. 3.4]{aubryperretWPRM}, under the given hypotheses we have
$$
\abs{V_{\PP(w)}(f)(\fq)}\leq \frac{1}{r_0}\sum_{i=0}^{r_0-1} \abs{V_{\PP(w^*)}(g_i)(\fq)},
$$
where $r_0=\gcd(w_0,q-1)$, $w^*=(1,w_1,\dots,w_m)$ and $g_i\in \fq[x_0,\dots,x_m]^{w^*}_d$. The stated bound follows from Theorem \ref{t:theorem_conjecture}.
\end{proof}
Unfortunately, this bounds seems far from being sharp. We give an example below.
\begin{ex}
Let $w=(2,3,5)$. For any even $q$, the conditions of Proposition \ref{p:upper_bound} are satisfied. For example, we get
$$
e_q(10;2,3,5)\leq 4q+1.
$$
However, this is not sharp since the monomial $x_0x_1x_2$ has $3q$ zeroes (recall Example \ref{ex:w0>1}).
\end{ex}

In our experiments, the lower bound from Equation (\ref{eq:false_conj}) is sharp in many cases for low degrees, while the upper bound from Proposition \ref{p:upper_bound} is not usually sharp.

\begin{ex}
    For $w=(2,3,5)$, \c{S}ahin gives a set of generators of $\PP(w)(\fq)$ \cite[Proposition~5.10]{mesutComputingVanishing}. The one with the smallest degree is $B_{0,1}$ (see Equation \eqref{eq:Bij}), with degree $w_0w_1(q-1)+w_0+w_1=6q-1$. For $5\leq d \leq 6q-2$ and $q\in \set{5,7,11}$, we have computed the minimum distance (or a range in which it lies) of the corresponding weighted projective Reed-Muller code using {\scshape Magma}, which is equal to $p_2-e_q(d;2,3,5)$ since the evaluation map is injective by the restriction on the degrees.
    We compare this value with the lower bound provided by Equation \eqref{eq:false_conj} in Table~\ref{tab:comparison}. One can see in Table \ref{tab:comparison} that the condition $\lcm(w)=30$ does not guarantee equality in Equation \eqref{eq:false_conj}.
\end{ex}

\begin{table}[ht]
    \centering
    \begin{minipage}[height=0.9\textheight]{.5\linewidth}
  \subfloat[$q=5$]{%
    \begin{tabular}{c|C{2.2cm}|C{2.2cm}}
    $d$&Lower bound Eq. \eqref{eq:false_conj}& $e_5(d;2,3,5)$ by {\scshape Magma} \\\hline
5,7,8,9 & \bf 11    & \bf 11 \\
6       & \bf 6     & \bf 6 \\
10,12   & 11        & 15 \\
11,13,14 & \bf 16   & \bf 16\\
15,16,18   & 16        & 19 \\
17,19   & \bf 21    & \bf 21 \\
20,22,24 & 21       & 23 \\
21      & 21        & 22 \\
23,26   & \bf 26    & \bf 26 \\
25,27,28 & 26       & 27 \\
\end{tabular}
}%
\vskip7.7em
  \subfloat[$q=7$]{%
    \begin{tabular}{c|C{2.2cm}|C{2.2cm}}
    $d$&Lower bound Eq. \eqref{eq:false_conj}& $e_7(d;2,3,5)$ by {\scshape Magma} \\\hline
5,7,8,9 &  \bf 15 &  \bf 15 \\
6       &  \bf 8 &  \bf 8 \\
10,12   & 15 & 21 \\
11,13,14& \bf 22 & \bf 22 \\
15,16,18& 22 &27 \\
17,19   & \bf 29 & \bf 29 \\
20,22,24& 29 & 33 \\
21      & 29 & 32 \\
23      & \bf 36 & \bf 36 \\
25,27,28& 36 & 39 \\
26      & 36 & 37 \\
29,31   & \bf 43 & \bf 43 \\
30      & 36 & 45 \\
32-34   & 43 & 45 \\
35      & 50 & 51 \\
36      & 43 & 47 \\
37-40   & 50 & 51 \\
\end{tabular}
}
    \end{minipage}%
    \begin{minipage}{.5\linewidth}
        \begin{subtable}{.5\linewidth}\centering
    \begin{tabular}{c|C{2.2cm}|C{2.2cm}}
    $d$&Lower bound Eq. \eqref{eq:false_conj}& $e_{11}(d;2,3,5)$ by {\scshape Magma} \\\hline
5,7,8,9 & \bf 23 & \bf 23 \\
6       & \bf 12 & \bf 12 \\
10,12   & 23 & 33 \\
11,13,14& \bf 34 & \bf 34 \\
15,16,18& 34 & 43 \\
17,19   & \bf 45 & \bf 45 \\
20,24   & 45 &(53-78*) \\
21      & 45 &(52-78*) \\
22      & 45 &(53-89*) \\
23      & 56 &(56-89*) \\
25      & 56 &(63-74) \\
26      & 56 &(61-79) \\
27      & 56 &(63-86) \\
28      & 56 &(63-89) \\
29      & 67 &(67-93) \\
30      & 56 &(73-97) \\
31      & 67 &(70-97) \\
32,33,34& 67 &(73-107) \\
35      & 78 &(83-112) \\
36      & 67 &(79-113) \\
37      & 78 &(77-113) \\
38,39   & 78 &(79-113) \\
40      & 78 &(83-117) \\
41,43,44,46& 89 &($\leq$ 89-118) \\
42      & 78 &(93-118) \\
45      & 89 &(103-118) \\
47,49   & 100 &($<$100-123) \\
48      & 89 &(91-123) \\
50,52,54& 100 &(113-123) \\
51      & 100 &(106-123) \\
53      & 111 &(113-123) \\
55,57      & 111 &123       \\
56      & 111 &(115-124) \\
58      & 111 &(123-126) \\
59      & 122 &(123-128) \\
60      & 111 &(123-128) \\
61,62,63,64& 122 &(124-128) \\
    \end{tabular}
    \caption{$q=11$}
        \end{subtable}
         \end{minipage}%
        \caption{$e_q(d;2,3,5)$ for $q=5, \: 7, \: 11$ and $5 \leq d \leq 6q-2$. The entries are bold when the lower bound equals the true value of $e_q(d;2,3,5)$. The starred upper bounds correspond to the value given in Proposition~\ref{p:upper_bound}, when the upper bound computed by {\scshape Magma} is larger.}
        \label{tab:comparison}
\end{table}

\section*{Acknowledgments}
The first author was supported  by the French National Research Agency through ANR \textit{Barracuda} (ANR-21-CE39-0009) and the French government \textit{Investissements d’Avenir} program ANR-11-LABX-0020-01. The second author was supported in part by the following grants: Grant PID2022-137283NB-C22 funded by MICIU/AEI/10.13039/501100011033 and by ERDF/EU, and FPU20/01311 funded by the Spanish Ministry of Universities. Part of this work was done during the visit of the second author to the University of Rennes. He thanks Jade Nardi and Gianira Alfarano for their hospitality.

\bibliographystyle{abbrv}


\appendix
\section{Isomorphisms of WPS}\label{s:appendix}

In the literature, one can find that $\PP(\gamma w)\cong \PP(w)$, and $\PP(w_0,w_1\gamma,\dots,w_m\gamma)\cong \PP(w)$ if $\gcd(w_0,\gamma)=1$. We provide now the explicit relation between the $\fq$-points of these projective spaces, as well as the corresponding coordinate rings.

\begin{lem}\label{l:weightswithgcd}
Let $w= (w_0,\dots,w_m)$. Let $\gamma$ be a positive integer and set $\gamma w= (w_0 \gamma,\dots,w_m\gamma)$. Then we have $\PP(\gamma w)(\fq)=\PP(w)(\fq)$ and  $\fq[x_0,\dots,x_m]_{\gamma d}^{\gamma w}=\fq[x_0,\dots,x_m]_d^w$ for every $d \geq 0$.
\end{lem}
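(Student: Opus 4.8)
The plan is to treat the two assertions separately, as each reduces to an elementary observation, and neither requires any of the machinery developed earlier in the paper.

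For the equality of graded components I would argue purely monomially. A polynomial $f\in\fq[x_0,\dots,x_m]$ lies in $\fq[x_0,\dots,x_m]_d^w$ if and only if every monomial $x_0^{a_0}\cdots x_m^{a_m}$ occurring in $f$ satisfies $w_0a_0+\dots+w_ma_m=d$, while it lies in $\fq[x_0,\dots,x_m]_{\gamma d}^{\gamma w}$ if and only if every such monomial satisfies $\gamma w_0a_0+\dots+\gamma w_ma_m=\gamma d$. Since $\gamma\geq 1$, dividing the latter relation by $\gamma$ shows the two conditions on a monomial are literally the same; hence the two sets of degree-admissible monomials coincide, and taking $\fq$-spans (recall $\fq[x_0,\dots,x_m]_d^w=\Span\M_d$) gives $\fq[x_0,\dots,x_m]_{\gamma d}^{\gamma w}=\fq[x_0,\dots,x_m]_d^w$. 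There is no obstacle here, and in particular the case $d\notin\gen{w}_\NN$ is covered, both sides then being $\{0\}$.

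For the equality $\PP(\gamma w)(\fq)=\PP(w)(\fq)$ the key point is that the $\overline{\F}^*$-orbits used to form the two quotients are identical. The $\gamma w$-orbit of a point $(x_0,\dots,x_m)\in\A^{m+1}\setminus\{0\}$ is $\{(\lambda^{\gamma w_0}x_0,\dots,\lambda^{\gamma w_m}x_m):\lambda\in\overline{\F}^*\}$, and its $w$-orbit is $\{(\mu^{w_0}x_0,\dots,\mu^{w_m}x_m):\mu\in\overline{\F}^*\}$. Because $\overline{\F}$ is algebraically closed, the power map $\lambda\mapsto\lambda^\gamma$ is surjective on $\overline{\F}^*$, so the substitution $\mu=\lambda^\gamma$ identifies the two orbits; thus $\PP(\gamma w)$ and $\PP(w)$ coincide as sets. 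Since the Frobenius automorphism acts the same way on both (coordinate-wise $x_i\mapsto x_i^q$ on $\A^{m+1}$, then descending to the quotient), their subsets of Frobenius-fixed points — the $\fq$-points — agree as well.

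I expect no genuine difficulty; the only step deserving a word of care is the surjectivity of $\lambda\mapsto\lambda^\gamma$ when $\ch\F$ divides $\gamma$, which still holds because the multiplicative group of an algebraically closed field is divisible (the characteristic-$p$ Frobenius being a bijection, and $X^\gamma-\mu$ having a root in general). One could alternatively phrase the whole argument as: the identity on $\A^{m+1}\setminus\{0\}$ descends to an isomorphism of $\fq$-varieties $\PP(\gamma w)\to\PP(w)$, from which both claims follow; but the direct orbit computation above is the most transparent and self-contained.
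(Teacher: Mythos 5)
Your proposal is correct and rests on the same key fact as the paper's own proof, namely the surjectivity of $\lambda\mapsto\lambda^\gamma$ on $\overline{\F}^*$; the graded-piece argument is identical to the paper's (a monomial relation $\sum \gamma w_i a_i=\gamma d$ is the relation $\sum w_i a_i=d$). The only difference is cosmetic but mildly advantageous: you show the two $\overline{\F}^*$-orbits of a point coincide outright, so the quotients are equal as sets and the Frobenius-fixed loci trivially agree, whereas the paper proves injectivity of the coordinate-wise inclusion $\phi$ and then invokes the common cardinality $p_m$ of the two sets of $\fq$-points to conclude bijectivity --- your version dispenses with that counting step.
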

\begin{proof}   
Let $\phi:\PP(\gamma w)\rightarrow \PP(w)$ be the inclusion map defined by $(Q_0:\cdots:Q_m)\mapsto (Q_0:\cdots:Q_m)$. Since $(\lambda^{w_0\gamma}Q_0:\cdots:\lambda^{w_m\gamma}Q_m)=(Q_0:\cdots:Q_m)$ in $\PP(w)$, for any $\lambda \in \overline{\fq}^*$, this map is well defined. This map is also injective, since $\phi(Q)=\phi(P)$ implies that there is $\lambda \in \overline{\fq}^*$ such that 
$$
(Q_0,\dots,Q_m)=(\lambda^{w_0}P_0,\dots,\lambda^{w_m}P_m).
$$
Let $\mu \in \overline{\fq}^*$ such that $\mu^\gamma=\lambda $. Then 
$$
(Q_0,\dots,Q_m)=(\mu^{w_0 \gamma}P_0,\dots,\mu^{w_m\gamma}P_m),
$$
that is, $Q=P$ in $\PP(\gamma w)$. It is now clear that $\phi$ induces a bijection between $\PP(\gamma w)(\fq)$ and $\PP(w)(\fq)$, since it sends $\fq$-points to $\fq$-points, it is injective, and the cardinality of both sets is $p_m$. Finally, a homogeneous polynomial of degree $d\gamma$ with weights $\gamma w$ can also be seen as a homogeneous polynomial of degree $d$ with weights $w$ (and vice versa).
\end{proof} 

Now we consider the isomorphism $\PP(w_0,w_1\gamma,\dots,w_m\gamma)\cong \PP(w)$, if $\gcd(w_0,\gamma)=1$, which is also called Delorme weight reduction \cite{delormeReduction}. We denote by $\varphi$ the map
$$
\begin{array}{lccc}
\varphi: &\PP(w_0,w_1\gamma,\dots,w_m\gamma) & \to & \PP(w) \\
& (Q_0:Q_1:\cdots:Q_m) & \mapsto & (Q_0^\gamma:Q_1:\cdots:Q_m).
\end{array}
$$
This map is well defined since $((\lambda^{w_0}Q_0)^\gamma:\lambda^{w_1\gamma}Q_1:\cdots:\lambda^{w_m\gamma}Q_m)=(Q_0^\gamma:Q_1:\cdots:Q_m)$ in $\PP(w)$, for any $\lambda \in \overline{\fq}^*$.

\begin{lem}\label{l:delorme}
Assume $\gcd(w_0,\gamma)=1$. We have 
\[\PP(w)(\fq)=\varphi(\PP(w_0,w_1\gamma,\dots,w_m\gamma)(\fq)).\]
Moreover, for any degree $d>0$, 
\[\varphi^* \fq[x_0,\dots,x_m]_d^w = \fq[x_0,\dots,x_m]_{\gamma d}^{(w_0,w_1\gamma,\dots,w_m\gamma)}.\]
\end{lem}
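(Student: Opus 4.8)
The plan is to prove both assertions of Lemma~\ref{l:delorme} by direct verification, using the hypothesis $\gcd(w_0,\gamma)=1$ in an essential way, exactly as in the analogous argument of Lemma~\ref{l:weightswithgcd}. First I would establish the equality of point sets. The inclusion $\varphi(\PP(w_0,w_1\gamma,\dots,w_m\gamma)(\fq))\subseteq \PP(w)(\fq)$ is immediate: $\varphi$ is well defined (as noted just before the statement), it maps $\fq$-points to $\fq$-points since raising a coordinate to the power $\gamma$ commutes with the Frobenius, and it takes values in $\PP(w)$. For the reverse inclusion I would show $\varphi$ is surjective onto $\PP(w)(\fq)$ at the level of $\fq$-points: given $(P_0:P_1:\cdots:P_m)\in\PP(w)(\fq)$, I need a $\gamma$-th root of (a suitable representative of) the first coordinate lying in $\fq$. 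If $P_0=0$ this is trivial. If $P_0\neq 0$, using that $\gcd(w_0,\gamma)=1$ one picks integers (or uses that $x\mapsto x^{\gamma}$ is a bijection of $\fq$ when $\gcd(\gamma,q-1)=1$, or more robustly rescales by an element of $\overline{\fq}^*$ and checks the resulting tuple is Frobenius-invariant hence defines an $\fq$-point of the source) to produce a preimage. The cleanest route, mirroring the proof of Lemma~\ref{l:weightswithgcd}, is: $\varphi$ is injective on $\fq$-points because $\varphi(Q)=\varphi(P)$ forces $(Q_0^\gamma,\dots,Q_m)=(\mu^{w_0\gamma}P_0^\gamma,\mu^{w_1\gamma}P_1,\dots)$ for some $\mu\in\overline{\fq}^*$, and then $\gcd(w_0,\gamma)=1$ lets one adjust $\mu$ by a $\gamma$-th root of unity to conclude $Q=P$ in the source; since $|\PP(w)(\fq)|=|\PP(w_0,w_1\gamma,\dots,w_m\gamma)(\fq)|=p_m$, injectivity plus the trivial inclusion gives the bijection.

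Second, for the statement on coordinate rings, I would observe that the pullback $\varphi^*$ acts on monomials by $\varphi^*(x_0^{a_0}x_1^{a_1}\cdots x_m^{a_m}) = x_0^{\gamma a_0}x_1^{a_1}\cdots x_m^{a_m}$. A monomial $x_0^{a_0}\cdots x_m^{a_m}$ is homogeneous of degree $d$ for the weight $w=(w_0,w_1,\dots,w_m)$ iff $w_0a_0+w_1a_1+\dots+w_ma_m=d$; multiplying by $\gamma$, its image has degree $w_0(\gamma a_0)+(w_1\gamma)a_1+\dots+(w_m\gamma)a_m=\gamma d$ for the weight $(w_0,w_1\gamma,\dots,w_m\gamma)$. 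This shows $\varphi^*\fq[x_0,\dots,x_m]_d^w \subseteq \fq[x_0,\dots,x_m]_{\gamma d}^{(w_0,w_1\gamma,\dots,w_m\gamma)}$. For the reverse inclusion, I would take any monomial $x_0^{b_0}x_1^{b_1}\cdots x_m^{b_m}$ of degree $\gamma d$ for $(w_0,w_1\gamma,\dots,w_m\gamma)$, so $w_0b_0 + \gamma(w_1b_1+\dots+w_mb_m)=\gamma d$; then $\gamma \mid w_0 b_0$, and since $\gcd(w_0,\gamma)=1$ we get $\gamma \mid b_0$, so $b_0=\gamma a_0$ for some $a_0\in\NN$, and the monomial is $\varphi^*(x_0^{a_0}x_1^{b_1}\cdots x_m^{b_m})$ with $x_0^{a_0}x_1^{b_1}\cdots x_m^{b_m}$ homogeneous of degree $d$ for $w$. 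Since both graded pieces are spanned by monomials, this settles the equality.

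The only genuinely delicate point is the divisibility step $\gamma \mid b_0$ in the coordinate ring argument and the corresponding surjectivity-on-points argument, both of which are exactly where $\gcd(w_0,\gamma)=1$ enters; everything else is bookkeeping. I expect the main obstacle to be stating the point-set bijection cleanly — specifically, making sure the adjustment of the scalar $\mu$ by a $\gamma$-th root of unity in $\overline{\fq}^*$ is justified (such roots exist since $\overline{\fq}$ is algebraically closed) and that the counting argument $|\PP(w)(\fq)|=p_m$ for both spaces (independent of weights, as recalled in the introduction) legitimately upgrades injectivity to bijectivity. Once that template from Lemma~\ref{l:weightswithgcd} is adapted, the proof is short.
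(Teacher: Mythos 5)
Your proposal is correct and follows essentially the same route as the paper: injectivity of $\varphi$ by adjusting the scalar using $\gcd(w_0,\gamma)=1$ (the paper does this explicitly via Bézout coefficients $uw_0+v\gamma=1$ and $\mu=(Q_0/P_0)^u\lambda^v$, which is equivalent to your root-of-unity adjustment), then upgrading to a bijection on $\fq$-points by the cardinality count $p_m$, and for the graded pieces the same divisibility observation that $\gamma\mid w_0b_0$ forces $\gamma\mid b_0$ so that $x_0^{b_0}$ is a pullback of $x_0^{b_0/\gamma}$. No gaps; only the scalar-adjustment step would need to be written out carefully (as the paper does) in a final version.
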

\begin{proof}
We start by showing that $\varphi$ is injective. Indeed, if $\varphi(Q)=\varphi(P)$, then there is $\lambda \in \overline{\fq}^*$ such that 
\begin{equation}\label{eq:delorme1}
(Q_0^\gamma,Q_1,\dots,Q_m)=(\lambda^{w_0}P_0^\gamma,\lambda^{w_1}P_1,\dots,\lambda^{w_m}P_m).
\end{equation}
First assume $Q_0\neq0 \neq P_0$. Since $\gcd(w_0,\gamma)=1$, there are integers $u,v$ such that $uw_0+v\gamma=1$. Let $\mu=(Q_0/P_0)^u \lambda^v \in \overline{\fq}^*$. Then we claim
$$
(Q_0,\dots,Q_m)=(\mu^{w_0}P_0,\mu^{w_1\gamma}P_1,\dots,\mu^{w_m\gamma}P_m).
$$
Indeed, we have to check that $\mu^\gamma=\lambda$ and $\mu^{w_0}=Q_0/P_0$. We have $\mu^\gamma=(Q_0/P_0)^{u\gamma } \lambda^{v\gamma}=\lambda^{uw_0+v\gamma}=\lambda$, since $\lambda^{w_0}=(Q_0/P_0)^\gamma$ (see Equation (\ref{eq:delorme1})). Similarly, $\mu^{w_0}=(Q_0/P_0)^{uw_0} \lambda^{vw_0}=(Q_0/P_0)^{uw_0+v\gamma}=Q_0/P_0$. If $Q_0=P_0=0$, then we take $\mu\in \overline{\fq}^*$ such that $\mu^\gamma=\lambda$. In both cases, we obtain that $Q=P$ in $\PP(w_0,w_1\gamma,\dots,w_m\gamma)$. 

With respect to $\fq$-points, it is clear that 
$$
\varphi(\PP(w_0,w_1\gamma,\dots,w_m\gamma)(\fq))\subseteq \PP(w)(\fq),
$$
and we have the equality by the injectivity of $\varphi$ and the fact that the number of $\fq$-points of both weighted projective spaces is $p_m$. 

Finally, regarding the last statement, let $f$ be a homogeneous polynomial of degree $d\gamma$ with weights $(w_0,w_1\gamma,\dots,w_m\gamma)$. Since $\gcd(w_0,\gamma)=1$, this implies that the exponents of $x_0$ appearing in the monomials of $f$ have to be multiples of $\gamma$. Let $g$ be the polynomial obtained by substituting every instance of $x_0^\gamma$ in the monomials of $f$ by $x_0$. Then $g$ is homogeneous of degree $d$ for the weights $w$, and 
$$
f(Q_0,\dots, Q_m)=g(Q_0^\gamma,Q_1,\dots, Q_m),
$$
for every $Q\in \PP(w)(\fq)$. This implies 
\[\varphi^* \fq[x_0,\dots,x_m]_d^w \supseteq \fq[x_0,\dots,x_m]_{\gamma d}^{(w_0,w_1\gamma,\dots,w_m\gamma)}\]
and the reverse inclusion is proved similarly by substituting instances of $x_0$ by $x_0^\gamma$. 
\end{proof}

In Lemma \ref{l:delorme}, the condition $\gcd(w_0,\gamma)=1$ is not a restriction, since otherwise we would have a nontrivial divisor of all the weights, and we may apply Lemma \ref{l:weightswithgcd}.

With respect to weighted projective lines, assuming $\gcd(w_0,w_1)=1$ (which we may always do in this case due to Lemma \ref{l:weightswithgcd}), Delorme weight reduction implies that $\PP(w_0,w_1)\cong \PP^1$. Consider the map
\begin{equation}\label{eq:isom_P(a,b)_P1}
\begin{array}{lccc}
\psi: &\PP(w_0,w_1) & \to & \PP(1,1)=\PP^1 \\
& (Q_0:Q_1) & \mapsto & (Q_0^{w_1}:Q_1^{w_0}).
\end{array}
\end{equation}
In terms of $\fq$-points, we have $\psi(\PP(w_0,w_1)(\fq))=\PP^1(\fq)$. 

\end{document}